\newtheorem{thm}{\bf Theorem}[section]
\newtheorem{lem}[thm]{\bf Lemma}
\newtheorem{cor}[thm]{\bf Corollary}
\newtheoremstyle{REMARK}{3pt}{3pt}{\rm}{}{\bf}{.}{0.5em}{}
\theoremstyle{REMARK}\newtheorem{rem}[thm]{Remark}
\newtheorem*{rem*}{Remark} \newtheorem*{ac*}{Acknowledgments}
\begin{document}
\begin{CJK}{UTF8}{gkai}

\title{Analysis of a nonlinear free-boundary tumor model with three layers}
\author{
Junde Wu$^\dagger$,\;
Hao Xu$^\dagger$\footnote{Corresponding author. \newline
   \qquad E-mails: wujund@suda.edu.cn (J. Wu),    
   1158637462@qq.com (Hao Xu), zdzyh@outlook.com(Yuehong Zhuang)} \;
   and
   Yuehong Zhuang$^\ddagger$
}

\date{$^\dagger$School of Mathematical Sciences, Soochow University,
  \\ [0.1 cm]
  Suzhou,   Jiangsu 215006, PR China
  \\ [0.2 cm]
  $^\ddagger$Department of Mathematics, Jinan University,
  \\ [0.1 cm]
  Guangzhou, Guangdong 510632, PR China
  }
  \medskip

\maketitle
\begin{abstract} 
\noindent In this paper, we study a nonlinear free boundary problem modeling the growth of spherically symmetric tumors. The tumor consists of a central necrotic core, an intermediate annual quiescent-cell layer, and an outer proliferating-cell layer. The evolution of tumor layers and the movement of the tumor boundary are totally governed by external nutrient supply and conservation of mass. 
The three-layer structure generates three free boundaries with boundary conditions of different types.
We develop a nonlinear analysis method to get over the great difficulty arising from free boundaries and the discontinuity of the nutrient-consumption rate function. By carefully studying the mutual relationships between the free boundaries, we reveal the evolutionary mechanism in tumor growth and the mutual transformation of its internal structures. The existence and uniqueness of the radial stationary solution is proved, and its globally asymptotic stability towards different dormant tumor states is established.

\smallskip

\noindent {\bf Keywords}: free boundary problem; three-layer tumor; stationary solution; stability

\smallskip

\noindent {\bf 2020 Mathematics Subject Classification}:  35B40;  35R35;  35Q92

\medskip

\end{abstract}

\newpage 

\section{Introduction}
\setcounter{equation}{0}
\hskip 1em

In this paper we study the following free boundary problem modeling the growth of
spherically symmetric tumors with three-layer structure:
\begin{equation}\label{1.1}
  \Delta_r \sigma = f(\sigma)\chi_{\{\sigma>\sigma_Q\}}+g(\sigma)
  \chi_{\{\sigma_D<\sigma\le\sigma_Q\}}\qquad\mbox{for}\;\; 0<r<R(t),\;\;t>0,
\end{equation}
\begin{equation}\label{1.2}
     \sigma_r(0,t)=0,\quad \sigma\big(R(t),t\big)=\bar\sigma \qquad \mbox{for}\;\;t>0, \qquad
\end{equation}
\begin{equation}\label{1.3}
     \displaystyle
  R'(t)R^2(t)=\int_0^{R(t)} \Big(S\big(\sigma(r,t)\big)\chi_{\{\sigma>\sigma_Q\}}-\nu_1\chi_{\{\sigma_D<\sigma\le\sigma_Q\}}-\nu_2\chi_{\{\sigma\le\sigma_D\}}\Big)r^2dr
 \quad\mbox{for}\;\;t>0,
\end{equation}
\begin{equation}\label{1.4}
      R(0)=R_0,
\end{equation}
  where $\sigma(r, t)$ and $R(t)$ are both unknown functions representing  the  concentration of  nutrients and  the  tumor radius at time $t>0$,  $\chi_E$ is the indicator function on a set $E$, namely $\chi_E(x)=1$ for $x\in E$ and 
  $\chi_E(x)=0$ for $x\notin E$. Constants $\sigma_Q$ and $\sigma_D$ are two positive nutrient concentration threshold values for distinguishing between the proliferating phase and the quiescent phase, and between the quiescent phase and the necrotic phase, respectively. 
   It makes that the region 
   $\{\sigma(r, t)>\sigma_Q\}$ is the proliferating layer with only proliferating cells,
   $\{\sigma_D<\sigma(r,t)\le\sigma_Q\}$
   is the quiescent layer with only quiescent cells,
   and $\{\sigma(r,t)\le\sigma_D\}$ is the necrotic core with only dead cells. $f(\sigma)$ and $g(\sigma)$ are two given functions representing the nutrient consumption rate functions for proliferating cells and quiescent cells, respectively,  $S(\sigma)$ is the volume growth rate function of proliferating cells. Constants
   $\bar\sigma$, $\nu_1$ and $\nu_2$  are all positive, and $\bar\sigma$ represents the external nutrient supply, $\nu_1$ and $\nu_2$ represent the removal rates for quiescent and necrotic cells, respectively.  Finally, $R_0>0$ is the initial tumor radius.   

   For simplicity of model computation and analysis, $f$, $g$ and $S$ are typically taken as constant functions or  linear functions with the form of 
\begin{equation}\label{1.5}
f(\sigma)=\lambda_1\sigma,\qquad g(\sigma)=\lambda_2\sigma,\qquad  
S(\sigma)=\mu(\sigma-\tilde\sigma),
\end{equation}
where  $\mu,\tilde\sigma,\lambda_1,\lambda_2$ are all positive constants (cf.\cite{byr-cha-96, byr-cha-97, cui-fri-01}).    
In this paper, we consider general nonlinear functions with the following assumptions:
 
\vspace{0.3em}

 \noindent $(A1)$  
 $f$, $g\in C^1[0,+\infty)$, $f'>0$, $g'>0$, $\displaystyle\sup_{[0,+\infty)} f'(x)$, $\displaystyle \sup_{[0,+\infty)} g'(x)<+\infty$ 
 and $f(0)=g(0)=0$.

 \vspace{0.1em}
  
 \noindent $(A2)$  
 $S\in C^1[0,+\infty)$, $S'>0$ and  $S(\tilde\sigma)=0$  for some $\tilde \sigma>0$.
 
  \vspace{0.1em}

\noindent $(A3)$ 
  $0< \sigma_D<\sigma_Q<\tilde\sigma$,
    $f(\sigma_Q)\ge g(\sigma_Q)$,  $S(\sigma_Q)\ge-\nu_1\ge -\nu_2$.
\vspace{0.3em}

These assumptions are all biologically meaningful. $(A1)$ and $(A2)$ mean that  nutrient consumption rate functions $f$ and $g$ and the volume growth rate function $S$ are all strictly  increasing  in the nutrient concentration. The constant $\tilde\sigma$ can be regarded as the nutrient concentration threshold at which the  birth rate and the death rate of proliferating cells are in balance, and the first inequality in $(A3)$ is natural (cf. \cite{byr-cha-97}).  The second  inequality in $(A3)$ means that proliferating cells consume nutrients faster than quiescent cells at their threshold concentration $\sigma_Q$. Since $-\nu_1$ and $-\nu_2$ can be regarded as the volume growth rate of quiescent cells and necrotic cells, respectively, 
the last inequality in $(A3)$ means that proliferating cells always grow faster than quiescent cells, and necrotic cells are removed more rapidly than quiescent cells. For more discussion on these assumptions, see \cite{cui-05, wu-wang-19, zheng-li-zhuang}. 

Problem \eqref{1.1}--\eqref{1.4} is a generalized three-layer tumor model suggested by Byrne and Chaplain \cite{byr-cha-96}. 
In the limiting case $\sigma_Q=\sigma_D=0$, it becomes the classical one-layer tumor model with only proliferating cells which has been extensively studied, many illuminating results such as asymptotic behavior of radial solutions and non-radial solutions, the 
existence of symmetry-breaking bifurcation stationary solutions and Hopf bifurcations have been
well established, we refer to \cite{cui-05,cui-09,cui-esc-07,cui-esc-08,fri-hu-06,fri-hu-08,fri-rei-99,he-xing-hu,hua-hu-24,hua-zha-hu-19, zhao2025} and references cited therein. 
In the case $\sigma_Q=\sigma_D>0$, it can be regarded as a two-layer necrotic tumor model, for the existence of radial stationary solutions, asymptotic stability of radial stationary solutions under radial or non-radial perturbations, and the
existence of non-flat
bifurcation stationary solutions, we refer readers to
see
\cite{bue-erc-08,
cui-06, cui-fri-01, wu-wang-19, wu-xu-20, xu-zhang-zhou,lu-hao-hu,wu-18,wu-19,wu-21}.
In the case $\sigma_Q>\sigma_D=0$, this problem can be regarded as another two-layer tumor model which contains a quiescent core and an outer shell of proliferating cells.  Liu and Zhuang  studied the asymptotic behavior in \cite{liu-zhuang} and time-delay effects in \cite{liu-z-2} with linear functions  (1.5). Recently, Wu, Xu and Zhuang \cite{wu-xu} established the existence and asymptotic stability of radial stationary solutions for the nonlinear consumption rate and proliferation rate functions, by thoroughly analyzing the relationships between model variables.

For the three-layer tumor model, Byrne and Chaplain \cite{byr-cha-97} first considered a simple case $f(\sigma)=g(\sigma)\equiv \lambda_0$
(a positive constant), and linear stability analysis and numerical simulation were carried out. Zheng, Li and Zhuang \cite{zheng-li-zhuang}  studied the case $f(\sigma)=g(\sigma)=\lambda\sigma$,  where the quiescent layer and proliferating layer can be handled as one layer together in mathematical analysis. Liu and Zhuang \cite{liu-z-3} further considered  $f(\sigma)=\delta_1+\lambda\sigma$ and $g(\sigma)\equiv\delta_2$ with positive constants $\delta_1,\delta_2,\lambda$ satisfying $\delta_1+\lambda\sigma_Q>\delta_2$.
By careful computation with explicit 
expressions of nutrient concentration $\sigma$ in the radius  $R$, they established the asymptotic stability of the unique radial stationary solution. 
 However, in reality, the proliferating cells and the quiescent cells have different and complex mechanisms of nutrient consumption and cell growth.  The formation of necrotic cores with distinct 
 multi-layered configuration in tumor growth is a basic and interesting problem in modeling and analysis which has been explored for several decades (cf. \cite{byr-cha-97, greenspan, low, perthame}).  Nonlinear nutrient consumption rate and cell growth rate
 functions should be considered in necrotic tumor models for an in-depth understanding of tumor growth in early stages.

  In this paper, we aim to rigorously study
  the interactions among different tumor  layers and asymptotic behavior of
  radial solutions of problem \eqref{1.1}--\eqref{1.4} under assumptions $(A1)$--$(A3)$. 
  Note that in nonlinear case the
 nutrient concentration $\sigma$ cannot be solved explicitly in $R$ any
more. 
The volume growth rate function and the consumption rate function have discontinuity  
across the inner two free boundaries.
 Comparing with two-layer tumor models,  the three-layer model features three free boundaries with boundary conditions of different types, which also gives rise to many new challenges. For instance, we need to address several different elliptic free boundary problems and a new nonlinear critical problem, see Lemma \ref{lem2.3} 
 and Lemma \ref{lem2.4}.
 The potential relations between these
 three free boundaries become very complicated and we need to provide some 
 insights into the growth mechanisms of these layers with different types of 
 tumor cells.

We shall develop an inside to outside
method to overcome these difficulties. We first solve a Cauchy problem for $\sigma$ in the region $\{r>\rho\}$ with any given necrotic radius $\rho>0$,
and using the shooting method to get the quiescent radius $\eta=\eta(\rho)$. Then by using the continuity of nutrient flux  across the boundary
$r=\eta$, we continue to solve another  Cauchy problem for
$\sigma$ in the region $\{r>\eta\}$ and similarly get the tumor radius $R=R(\eta)$. To study the relationships between $\rho$, $\eta$, $R$ and solutions of Cauchy problems
on different model parameters, we 
carefully choose boundary value conditions and apply the linearization method to related elliptic problems, based on the maximum principle. With some delicate arguments, we completely figure out various dependence relationships between three free boundaries and the external nutrient supply $\bar\sigma$.  We finally find two critical nutrient values $\sigma^*$ and $\sigma_*$ with $\sigma^{*}>\sigma_*>\tilde\sigma$ such that free boundary problem \eqref{1.1}--\eqref{1.4} has a unique three-layer  stationary solution if and only if $\bar\sigma>\sigma^{*}$, and has 
a unique two-layer proliferating-quiescent stationary solution if and only if
$\sigma_*<\bar\sigma\le \sigma^*$, and
has a unique one-layer proliferating stationary 
solution if and only if 
$\tilde\sigma<\bar\sigma\le \sigma_*$. Moreover, we establish the global asymptotic stability of all these stationary solutions. 
It is worthy of note that
our method based on the shooting method and
the linearization method to elliptic problems layer by layer from the inside to outside is also applicable for similar multi-layer problems.

The outline of the rest of this paper is as follows.
In Section \ref{2}, we give the existence and uniqueness of  stationary solutions of problem \eqref{1.1}--\eqref{1.4}. 
In Section \ref{3}, we establish the global well-posedness  of problem (\ref{1.1})--$(\ref{1.4})$
 and  the asymptotic stability  of  stationary solutions. In the last section, we draw a conclusion and give some biological implications. 

\medskip
\hskip 1em

\section{Stationary solutions}\label{2}
\setcounter{equation}{0}
\hskip 1em

In this section, we study the existence and uniqueness of stationary solutions of problem $(\ref{1.1})$--$(\ref{1.4})$. Clearly, the stationary 
solutions fall into three distinct types. The stationary solution with a
one-layer structure is denoted by $(\sigma_s,R_s)$, for the dormant tumor consists entirely of proliferating cells, which satisfies

\begin{equation}\label{2.3}
\left\{
\begin{array}{l}
  \displaystyle\sigma''(r)+\frac{2}{r}\sigma'(r) = f(\sigma)
  \qquad\mbox{for}\;\; 0<r<R,
\\[0.3 cm]
  \sigma'(0)=0, \quad \sigma(0)\ge\sigma_Q, \quad
  \sigma(R)=\bar\sigma,
\\[0.3 cm]
  \displaystyle
  \int_0^R S(\sigma(r))r^2dr=0.
\end{array}
\right.
\end{equation}

Another type is the stationary solution with a two-layer structure, which  is denoted by 
 $(\sigma_s,\eta_s,R_s)$ and represents a dormant tumor with a quiescent core whose radius is $\eta_s$, surrounded by a proliferating shell with radius $R_s$. It satisfies
\begin{equation}\label{2.2}
\left\{
\begin{array}{l}
   \displaystyle\sigma''(r)+\frac{2}{r}\sigma'(r) = g(\sigma)
  \qquad\mbox{for}\;\; 0<r<\eta,
\\[0.3 cm]
 \sigma'(0)=0, \quad \sigma(0)\ge\sigma_D,
 \quad
  \sigma(\eta)=\sigma_Q,
  \\[0.3 cm]
  \displaystyle\sigma''(r)+\frac{2}{r}\sigma'(r) = f(\sigma)
  \qquad\mbox{for}\;\; \eta<r<R,
\\[0.3 cm]
\sigma'(\eta-0)=\sigma'(\eta+0),\quad \sigma(R)=\bar\sigma, 
\\[0.3 cm]
  \displaystyle
  \int_{\eta}^R S(\sigma(r))r^2dr-\int_0^\eta \nu_1r^2dr=0.
\end{array}
\right.
\end{equation}

 The last type is the stationary solution with a three-layer structure, which is denoted by  $(\sigma_s,\rho_s,\eta_s,R_s)$ and represents a dormant tumor with a necrotic core whose radius is $\rho_s$, an intermediate quiescent layer whose radius is $\eta_s$ and an outer proliferating shell with radius $R_s$.    It satisfies the following problem: 
\begin{equation}\label{2.1}
\left\{
\begin{array}{l}
\sigma(r)=\sigma_D \qquad \mbox{for}\;\; 0\le r\le \rho, 
\\[0.3cm]
  \displaystyle\sigma''(r)+{\frac2r}\sigma'(r) = g(\sigma)
  \qquad\mbox{for}\;\; \rho<r<\eta,
\\[0.3 cm]
\sigma'(\rho)=0,\qquad\sigma(\eta)=\sigma_Q,
\\[0.3 cm]
 \displaystyle\sigma''(r)+{\frac2r}\sigma'(r) = f(\sigma)
  \qquad\mbox{for}\;\; \eta<r<R,
\\[0.3 cm]
  \sigma'(\eta-0)=  \sigma'(\eta+0),\quad
  \sigma(R)=\bar\sigma,
\\[0.3 cm]
  \displaystyle
  \int_{\eta}^R S(\sigma(r))r^2dr-\int_{\rho}^{\eta}\nu_1 r^2dr-\int_0^{\rho}\nu_2 r^2dr=0.
\end{array}
\right.
\end{equation}

The above one-layer and two-layer stationary solutions 
without the constraints on $\sigma(0)$
have been well studied; see \cite{cui-05, wu-xu}. 
Therefore, we mainly focus on the existence and uniqueness of the three-layer stationary solution 
of problem $(\ref{1.1})$--$(\ref{1.4})$.

We first investigate the following initial value problem:
\begin{equation}\label{2.4}
\left\{
\begin{array}{l}
  \displaystyle u''(r)+\frac{2}{r}u'(r) = g(u(r))
  \qquad\mbox{for}\;\; r>\rho,
\\[0.3 cm]
u(\rho)=\sigma_D,\qquad u'(\rho)=0.
\end{array}
\right.
\end{equation}

\medskip

\begin{lem}\label{lem2.1}
       Under assumption $(A1)$,
       for any given  $\rho\ge0$, problem $(\ref{2.4})$
       admits a unique  solution $u=U_1(r,\rho)\in C^2[\rho,+\infty)$ 
       with the following properties:
    
    $(i)$ $U_1$ is strictly increasing and strictly convex  in  $r$, and satisfies
    $$\displaystyle \lim_{r\to +\infty} U_1(r,\rho)=+\infty.$$
    
    $(ii)$ $U_1$ and $(U_1)_r$ are  both strictly decreasing in  $\rho$, i.e., 
$$\frac{\partial U_1}{\partial \rho}(r,\rho)<0,\qquad
  \frac{\partial^2 U_1}{\partial\rho\partial r}(r,\rho)<0\qquad\mbox{for}\;\; r>\rho,\; \rho>0.
$$

\end{lem}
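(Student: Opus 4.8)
The plan is to recast \eqref{2.4} as an integral equation and read off all the qualitative properties from it, handling the monotonicity in $\rho$ through the variational (linearized) equation. Multiplying the ODE by $r^2$ gives $(r^2u')'=r^2g(u)$; integrating twice and using $u(\rho)=\sigma_D$, $u'(\rho)=0$ converts \eqref{2.4} into the fixed-point equation
$$u(r)=\sigma_D+\int_\rho^r\frac{1}{t^2}\int_\rho^t s^2 g(u(s))\,ds\,dt.$$
Since $g\in C^1$ is locally Lipschitz and the kernel $t^{-2}\int_\rho^t s^2(\cdot)\,ds$ is bounded on compact sets (even across $\rho=0$, where the inner integral is $O(t^3)$, so the apparent singularity is integrable), the contraction mapping principle gives a unique local $C^2$ solution. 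To rule out finite-time blow-up and reach all of $[\rho,+\infty)$ I would invoke $(A1)$: because $g(0)=0$ and $M:=\sup g'<+\infty$ we have $0\le g(u)\le Mu$, and substituting this into the integral equation yields $u(r)\le\sigma_D+M\int_\rho^r(r-s)u(s)\,ds$, so a Gronwall/comparison argument against the linear equation $v''+\tfrac2r v'=Mv$ bounds $u$ on every finite interval, forcing the maximal solution to be global.

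For $(i)$, the identity $r^2u'(r)=\int_\rho^r s^2 g(u(s))\,ds$ is the workhorse. A short continuation argument gives $u(r)\ge\sigma_D$ for all $r$: if $u$ first dropped below $\sigma_D$ at some $r_0$, then on $[\rho,r_0]$ one has $g(u)\ge g(\sigma_D)>0$ (using $g'>0$, $g(0)=0$, $\sigma_D>0$), forcing $u'>0$ and hence $u(r_0)>\sigma_D$, a contradiction. With $u\ge\sigma_D$ we get $u'(r)=r^{-2}\int_\rho^r s^2 g(u)\,ds>0$, so $u$ is strictly increasing; and since $u$ increasing gives $\int_\rho^r s^2 g(u(s))\,ds\le g(u(r))\tfrac{r^3-\rho^3}{3}$, the ODE yields $u''(r)=g(u(r))-\tfrac{2}{r^3}\int_\rho^r s^2 g(u)\,ds\ge g(u(r))\big(\tfrac13+\tfrac{2\rho^3}{3r^3}\big)>0$, i.e. strict convexity. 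Finally, the bound $u'(r)\ge\tfrac{g(\sigma_D)}{3}\,\tfrac{r^3-\rho^3}{r^2}$ integrates to force $u(r)\to+\infty$.

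For $(ii)$, I would differentiate in the parameter $\rho$. After the shift $\tau=r-\rho$ that fixes the initial point, standard smooth dependence on data makes $U_1$ jointly $C^1$ in $(r,\rho)$ for $\rho>0$, so $w:=\partial_\rho U_1$ solves the variational equation $w''+\tfrac2r w'=g'(U_1)\,w$. Differentiating the identities $U_1(\rho,\rho)=\sigma_D$ and $\partial_r U_1(\rho,\rho)=0$ totally in $\rho$, and using $\partial_r^2 U_1(\rho,\rho)=u''(\rho)=g(\sigma_D)$, produces the initial data $w(\rho)=0$ and $w'(\rho)=-g(\sigma_D)<0$. The sign analysis then runs through the integrated form
$$r^2 w'(r)=-\rho^2 g(\sigma_D)+\int_\rho^r s^2 g'(U_1(s,\rho))\,w(s)\,ds:$$
on the maximal interval where $w'<0$ (which is nonempty since $w'(\rho)<0$), $w$ decreases from $0$ so $w<0$, making both terms on the right negative, hence $w'(r)\le-\rho^2 g(\sigma_D)/r^2<0$; this bounds $w'$ away from $0$ and prevents it from ever vanishing, so $w'<0$ and $w<0$ for all $r>\rho$, giving $\partial_\rho U_1=w<0$ and $\partial_\rho\partial_r U_1=w'<0$. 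I expect $(ii)$ to be the main obstacle: it hinges on getting the variational initial conditions right and, crucially, on the strictly negative term $-\rho^2 g(\sigma_D)$, which is available precisely because $\rho>0$ (it vanishes at $\rho=0$, consistent with the hypothesis $\rho>0$ in the statement).
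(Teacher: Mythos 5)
Your proposal is correct and follows essentially the same route as the paper: a fixed-point/integral-equation argument for existence (with global Lipschitzness of $g$ from $(A1)$ giving global existence), the identity $r^2u'(r)=\int_\rho^r s^2g(u(s))\,ds$ driving monotonicity, convexity and the divergence at infinity in part $(i)$, exactly as in the paper's \eqref{daoshu} and the display following it. The only place you go beyond the paper is part $(ii)$: the paper outsources this to ``a comparison argument with slight modifications of Lemma 2.2 $(iii)$ in \cite{wu-xu},'' whereas you supply a complete argument by linearizing in $\rho$, deriving the variational data $w(\rho)=0$, $w'(\rho)=-g(\sigma_D)<0$, and running a continuation argument on the integrated identity $r^2w'(r)=-\rho^2 g(\sigma_D)+\int_\rho^r s^2 g'(U_1)w\,ds$. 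That version is in the same spirit as the paper's treatment of the analogous claims \eqref{2.12}--\eqref{2.15} (linearization plus sign analysis), but your quantitative bound $w'(r)\le -\rho^2g(\sigma_D)/r^2$ is a clean substitute for a maximum-principle argument on the unbounded interval $[\rho,+\infty)$, and you correctly identify why the hypothesis $\rho>0$ is needed. No gaps.
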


\begin{proof}
  The local existence and uniqueness of solutions to problem \eqref{2.4} can be proved by using a Banach fixed point argument, similarly to the proof of Lemma 2.2 in \cite{wu-xu}.
  The global existence is guaranteed by the global
  Lipschitz continuity of $g$ due to $(A1)$. Hence problem
  \eqref{2.4} has a unique global solution $u = U_1(r, \rho)$ for $r\in[\rho,+\infty)$. Clearly, $U_1(r, \rho)\ge\sigma_D$.
 Then we have  $g(U_1(r,\rho))\ge g(\sigma_D)$, which together with integrating $(\ref{2.4})$ implies
\begin{equation}\label{daoshu}
 u'(r)=\frac{1}{r^2}
 \int_{\rho}^r g(U_1(\tau,\rho))\tau^2d\tau\ge g(\sigma_D)\frac{r^3-\rho^3}{3r^2}>0
 \qquad\mbox{for}\;\;r> \rho.
\end{equation}
Combining $(A1)$ with $(\ref{daoshu})$, we derive that 
\begin{equation*}
    \lim\limits_{r\to+\infty}U_1(r,\rho)=\lim\limits_{r\to+\infty}(U_1)_r(r,\rho)=+\infty.
\end{equation*}
By  $(A1)$, $(\ref{2.4})_1$ and $(\ref{daoshu})$, we further get
\begin{equation}
\begin{matrix}
\begin{aligned}
 u''(r) -\displaystyle\frac{u'(r)}{r} &=g(U_1(r,\rho))
  -\displaystyle\frac{3}{r^3}\int_{\rho}^r g(U_1(\tau,\rho))d\tau  \\     
  &> g(U_1(r,\rho))-\frac{3}{r^3} g(U_1(r,\rho))\frac{1}{3}(r^3-\rho^3)\\
  &=g(U_1(r,\rho))\frac{\rho^3}{r^3}\ge 0.
\end{aligned}   
\end{matrix}
\end{equation}
Thus assertion $(i)$ follows.
Finally, we observe that for $r>\rho$,
\begin{equation}\label{Ur}
    r^2 (U_1)_r(r,\rho)=\int_\rho^r  g( U_1(\tau,\rho))\tau^2 d\tau.
\end{equation}
By $(A1)$, $(\ref{Ur})$  and a comparison argument with
 slight modifications of the proof of 
Lemma 2.2 $(iii)$ in \cite{wu-xu},
 we get assertion $(ii)$.  
 \end{proof}

Given $\rho\ge 0$ and
  $\sigma_Q>\sigma_D$,  Lemma \ref{lem2.1} ensures the existence and uniqueness of $\eta=\eta(\rho)$ $\in(\rho,+\infty)$ such that
\begin{equation}\label{shoot}
U_1(\eta(\rho),\rho)=\sigma_Q.
\end{equation}
Moreover,  $\eta(\rho)$ is strictly increasing on $(0,+\infty)$,
which implies that there exists  $\eta^*>0$ such that
\begin{equation}\label{eta*} 
\lim_{\rho\to 0^+} \eta(\rho)=\eta^*.
\end{equation}
Clearly,  $\eta^*>0$, and 
 $\eta^*$ is the critical radius of the quiescent core in the two-layer problem \eqref{2.2} where $\sigma(0)=\sigma_D$. 

By the strict monotonicity of $\eta=\eta(\rho)$, we infer that the mapping $\rho\mapsto \eta(\rho)$ is a 1-1 correspondence from $[0,+\infty)$ to $[\eta^*,+\infty)$. For convenience, we rewrite $\rho=\rho(\eta)$ for  $\eta\in[\eta^*,+\infty)$, and denote  
$$
u(r)=U_1(r,\rho(\eta))=:\widetilde U_1(r,\eta)
\qquad\mbox{for}\;\;\rho(\eta)<r<\eta,\,\eta\geq\eta^*.
$$ 
In summary, given any $\eta>\eta^*$, the function $\widetilde U_1(r,\eta)$ ($\rho(\eta)<r<\eta$) uniquely solves $(\ref{2.1})_2$--$(\ref{2.1})_3$.

Based on $(\ref{daoshu})$, we define
\begin{equation}\label{2.10} 
\Phi(\eta):=(\widetilde U_1)_r(\eta,\eta)=\frac{1}{\eta^2}\int_{\rho(\eta)}^{\eta}g(\widetilde U_1(\tau,\eta))\tau^2d\tau
\quad\mbox{for}\;\; \eta\ge\eta^*.
\end{equation}
By $(A1)$ and Lemma 2.1 $(i)$,  
\begin{equation}\label{2.11}
0<\Phi(\eta)<\frac{1}{3}g(\sigma_Q)\eta \quad \mbox{for} \;\;\eta\ge\eta^*.
\end{equation}
Moreover, we claim that 
\begin{equation}\label{2.12}
\Phi'(\eta)>0 \qquad \mbox{for}\;\; \eta>\eta^*.
\end{equation}
In fact, by $(\ref{2.1})_2$--$(\ref{2.1})_3$, we  see that for every $\eta>\eta^*$ the function $u_{\eta}(r):=\frac{\partial \widetilde U_1}{\partial \eta}(r,\eta)$ satisfies the following elliptic problem
\begin{equation}\label{2.13}
\left\{
\begin{array}{l}
  \displaystyle(u_{\eta})''+\frac{2}{r}(u_{\eta})' = g'(\widetilde U_1)u_{\eta}
  \qquad\mbox{for}\;\; \rho<r<\eta,
\\[0.3 cm]
u_{\eta}(\rho)=0,\qquad u_{\eta}(\eta)=-(\widetilde U_1)_r(\eta,\eta)<0,
\end{array}
\right.
\end{equation}
where $\rho=\rho(\eta)$.
  Then by the strong  maximum principle, 
\begin{equation}\label{2.14}
  u_\eta(r)=\frac{\partial \widetilde U_1}{\partial \eta}(r,\eta)<0 \qquad \mbox{for} \;\; \rho<r<\eta.
    \end{equation}
Moreover, the function $u_r(r):=\frac{\partial \widetilde U_1}{\partial r}(r,\eta)$ satisfies the following system
\begin{equation*}
\left\{
\begin{array}{l}
  \displaystyle(u_r)''+\frac{2}{r}(u_r)'
  = g'(\widetilde U_1) u_r+\frac{2}{r^2}u_r
  \qquad\mbox{for}\;\; \rho<r<\eta,
\\[0.3 cm]
u_r(\rho)=0,\qquad u_r(\eta)=(\widetilde U_1)_r(\eta,\eta)>0,
\end{array}
\right.
\end{equation*}
where $\rho=\rho(\eta)$.
Denote $w(r)=u_r(r)+u_{\eta}(r)$.
  It satisfies
\begin{equation*}
\left\{
\begin{array}{l}
  \displaystyle w''(r)+\frac{2}{r}w'(r)= g'(\widetilde U_1)w(r)+\frac{2}{r^2} u_r
  \qquad\mbox{for}\;\; \rho<r<\eta,
\\[0.3 cm]
w(\rho)=0,
\qquad
w(\eta)=0.
\end{array}
\right.
\end{equation*}
Note that $g'(\widetilde U_1)>0$ and $u_r(r)>0$ for $r>\rho$. Then by applying the strong maximum principle and Hopf lemma, we obtain
\begin{equation}\label{2.15}
w'(\eta) 
=(\widetilde U_1)_{rr}(\eta,\eta)+(\widetilde U_1)_{r\eta}(\eta,\eta)=\Phi'(\eta)>0.
\end{equation}
This proves \eqref{2.12}.

Next, we proceed to solve problem $\eqref{2.1}_4$--$\eqref{2.1}_5$ by considering the following initial value problem: 
\begin{equation}\label{2.16}
\left\{
\begin{array}{l}
 \displaystyle u''(r)+\frac{2}{r}u'(r) = f(u(r))
  \qquad\mbox{for}\;\; r>\eta,
\\[0.3 cm]
  u(\eta)=\sigma_Q,
\\[0.3 cm]
  u'(\eta)=  \Phi(\eta).
\end{array}
\right.
\end{equation}

\medskip

\begin{lem}\label{lem2.2}
Under assumptions $(A1)$ and $(A3)$,  for any given $\eta\ge\eta^*$,  problem $(\ref{2.16})$
   has a unique solution $u= U_2(r,\eta)\in
   C^2\left[\eta, +\infty\right)$ with the following properties:

 $(i)$ $ U_2$ is strictly increasing and strictly convex in $r$, i.e.,
$$
  (U_2)_{rr}(r,\eta) >\frac{1}{r}(U_2)_r(r,\eta)> 0 \qquad\mbox{for}\;\;
  r>\eta,
$$
  and satisfies
  $$\displaystyle \lim_{r\to +\infty}U_2(r,\eta)=+\infty.$$

$(ii)$ $U_2$ is strictly decreasing
in $\eta$,  i.e., 
$$
\frac{\partial U_2}{\partial \eta}(r,\eta)<0 \qquad \mbox{for} \;\; r>\eta,\; \eta>\eta^*.
$$
\end{lem}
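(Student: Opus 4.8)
The plan is to follow the template of Lemma \ref{lem2.1}: existence and the $r$-properties of part $(i)$ come from the integral form of the equation, while the $\eta$-monotonicity of part $(ii)$ is reserved for a linearization argument. First I would establish existence and uniqueness exactly as before: local solvability of \eqref{2.16} by the Banach fixed point argument of Lemma 2.2 in \cite{wu-xu}, and global existence on $[\eta,+\infty)$ from the global Lipschitz continuity of $f$ in $(A1)$. Integrating \eqref{2.16} yields the identity $r^2u'(r)=\eta^2\Phi(\eta)+\int_\eta^r\tau^2 f(u(\tau))\,d\tau$, which is the workhorse for the rest.

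For $(i)$, I would first show $u'>0$, hence $u>\sigma_Q$, on $(\eta,+\infty)$ by a continuation argument: since $u'(\eta)=\Phi(\eta)>0$ by \eqref{2.11}, on the maximal interval where $u'>0$ one has $u>\sigma_Q>0$, so $f(u)>0$ and the integral identity forces $u'>0$ to persist, so the interval is all of $(\eta,+\infty)$. Strict convexity $u''-\tfrac1r u'>0$ then reduces, via the identity and the monotonicity of $f(u)$, to the pointwise inequality $\eta f(u(r))\ge 3\Phi(\eta)$; this holds because $f(u(r))\ge f(\sigma_Q)\ge g(\sigma_Q)$ by $(A3)$ while $3\Phi(\eta)<g(\sigma_Q)\eta$ by \eqref{2.11}. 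Finally $U_2\to+\infty$ follows from $u'(r)\ge f(\sigma_Q)(r^3-\eta^3)/(3r^2)$ upon integration.

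The hard part is $(ii)$. Writing $v:=\partial_\eta U_2$, differentiation of \eqref{2.16} shows that $v$ solves the linear radial equation $v''+\tfrac2r v'=f'(U_2)v$ on $(\eta,+\infty)$, and differentiating the boundary identity $U_2(\eta,\eta)=\sigma_Q$ gives the crucial value $v(\eta)=-\Phi(\eta)<0$. The difficulty is that the domain is unbounded and the sign of $v$ cannot be read off from $v(\eta)$ alone: moving the interface outward lowers $U_2$, yet the initial slope $\Phi(\eta)$ simultaneously increases ($\Phi'>0$ by \eqref{2.12}), and these two effects compete. To resolve this I would pass to $\psi:=rv$, which satisfies the one-dimensional equation $\psi''=f'(U_2)\psi$ with $\psi(\eta)=-\eta\Phi(\eta)<0$ and $\psi'(\eta)=\Phi(\eta)+\eta\Phi'(\eta)-\eta f(\sigma_Q)$, the latter obtained by also differentiating the slope condition $(U_2)_r(\eta,\eta)=\Phi(\eta)$ and using the equation at $r=\eta$. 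Since $\eta f(\sigma_Q)\ge\eta g(\sigma_Q)>3\Phi(\eta)$, an estimate of the form $\eta\Phi'(\eta)\le 2\Phi(\eta)$ (equivalently, $\Phi(\eta)/\eta^2$ nonincreasing) would give $\psi'(\eta)<0$. Then $\{\psi<0,\ \psi'<0\}$ is invariant for $\psi''=f'(U_2)\psi$, since while $\psi<0$ one has $\psi''<0$, so $\psi'$ stays negative and $\psi$ stays negative, which yields $v<0$ throughout.

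I expect the main obstacle to be precisely this quantitative control of $\Phi'(\eta)$, which must be extracted from the inner problem of Lemma \ref{lem2.1}, in the same spirit as the derivation of \eqref{2.12}. The natural machinery is the auxiliary function $w=u_r+u_\eta$, which here satisfies $w''+\tfrac2r w'-f'(U_2)w=\tfrac2{r^2}u_r>0$ with $w(\eta)=0$ and $w'(\eta)=\Phi'(\eta)$, so that the strong maximum principle and Hopf's lemma govern its sign and the behavior of the linearized quantities as $r\to+\infty$.
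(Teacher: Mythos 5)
Your existence argument and part $(i)$ are sound and match the paper's route: the integral identity $r^2u'(r)=\eta^2\Phi(\eta)+\int_\eta^r\tau^2f(u(\tau))\,d\tau$ together with $\eta f(\sigma_Q)\ge \eta g(\sigma_Q)>3\Phi(\eta)$ (from $(A3)$ and \eqref{2.11}) does give $u''-\tfrac1r u'>0$ and the blow-up at infinity.

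Part $(ii)$, however, has a genuine gap exactly where you flag "the main obstacle." You reduce $\psi'(\eta)<0$ to the estimate $\eta\Phi'(\eta)\le 2\Phi(\eta)$, but you neither prove it nor can the machinery you invoke deliver it: the auxiliary function $w=u_r+u_\eta$ on the \emph{inner} problem (with $g'(\widetilde U_1)$, not $f'(U_2)$) is what the paper uses to get \eqref{2.12}, i.e.\ the \emph{lower} bound $\Phi'(\eta)>0$ via Hopf's lemma; it gives no upper bound on $\Phi'$. The paper sidesteps this entirely with an exact identity rather than an inequality: differentiating the integral formula \eqref{2.10} for $\Phi$ in $\eta$ yields
\begin{equation*}
\Phi'(\eta)=-\frac{2}{\eta}\Phi(\eta)+g(\sigma_Q)+\Psi(\eta),\qquad
\Psi(\eta)=-\frac{\rho^2}{\eta^2}g(\sigma_D)\rho'(\eta)+\frac{1}{\eta^2}\int_{\rho}^{\eta}g'(\widetilde U_1)\,\frac{\partial\widetilde U_1}{\partial\eta}\,\tau^2d\tau,
\end{equation*}
and $\Psi<0$ by \eqref{2.14} and $\rho'\ge0$. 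Substituting this into $z'(\eta)=\Phi'(\eta)+\tfrac{2}{\eta}\Phi(\eta)-f(\sigma_Q)$ makes the $\Phi'$ and $\tfrac{2}{\eta}\Phi$ terms cancel exactly, leaving $z'(\eta)=g(\sigma_Q)-f(\sigma_Q)+\Psi(\eta)<0$ by $(A3)$ — no quantitative control of $\Phi'$ is needed. (The same identity would also rescue your $\psi=rv$ computation, since $\psi'(\eta)=-\Phi(\eta)+\eta\bigl(g(\sigma_Q)-f(\sigma_Q)\bigr)+\eta\Psi(\eta)<0$.) Once $z(\eta)<0$ and $z'(\eta)<0$ are in hand, your invariance argument for the negative cone under the linearized equation is fine and coincides with the paper's conclusion. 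So: replace the unproven bound $\eta\Phi'\le 2\Phi$ by the identity \eqref{2.18} together with \eqref{2.19}, and the proof closes.
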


\begin{proof}

The existence and uniqueness of solutions to problem $(\ref{2.16})$ and assertion $(i)$ can be easily verified similarly 
as Lemma \ref{lem2.1} $(i)$, so we only need to prove assertion $(ii)$.
For any given $\eta>\eta^*$, define 
$$
z(r):=\displaystyle\frac{\partial U_2}{\partial \eta}(r,\eta).
$$
Then by $(\ref{2.10})$, $(\ref{2.11})$ and $(\ref{2.16})$, we have
\begin{equation}\label{2.17}
 \left\{
    \begin{array}{l}
     z''(r)+\displaystyle\frac{2}{r}z'(r)=f'(U_2)z(r)\qquad \mbox{for} \;\; r>\eta,   
     \\[0.3cm]
       z(\eta)=-\Phi(\eta)<0,
    \\[0.3cm] z'(\eta)=\Phi'(\eta)+\displaystyle\frac{2}{\eta}\Phi(\eta)-f(\sigma_Q),
    \end{array}
    \right.
\end{equation}
and
\begin{equation}\label{2.18}
\Phi'(\eta)=-\frac{2}{\eta}\Phi(\eta)+g(\sigma_Q)+\Psi(\eta),
\end{equation}
where 
$$
\Psi(\eta)=-\frac{\rho^2(\eta)}{\eta^2}g(\sigma_D)\rho'(\eta)+\frac{1}{\eta^2}\int_{\rho(\eta)}^{\eta}g'\big(\widetilde U_1(\tau,\eta)\big)\frac{\partial \widetilde U_1}{\partial \eta}(\tau,\eta)\tau ^2d\tau.
$$
From $(A1 )$, $(\ref{2.14})$ and $\rho'(\eta)\ge 0$, we  have 
\begin{equation}\label{2.19}
    \Psi(\eta)<0 \qquad \mbox{for} \;\; \eta>\eta^*.
\end{equation}
Substituting $(\ref{2.18})$ into $(\ref{2.17})_3$ and using $(A3)$, $(\ref{2.19})$, we obtain
$$
z'(\eta)=g(\sigma_Q)-f(\sigma_Q)+\Psi(\eta)<0.
$$
Thus by $f'>0$ and (\ref{2.17}), we easily get 
$$
  z'(r)<0\quad\mbox{and}\quad
  z(r)
<0 \qquad \mbox{for}\;\; r>\eta.
$$
The proof is complete.
\end{proof}

Given $\bar\sigma>\sigma_Q$ and $\eta\ge\eta^*$,
Lemma \ref{lem2.2} implies that there exists 
a unique $R=R(\eta,\bar\sigma)\in(\eta,+\infty)$ such that 
\begin{equation}\label{shoot2}
U_2(R(\eta,\bar\sigma),\eta)=\bar\sigma.
\end{equation}
Then by denoting 
\begin{equation}
\widetilde\Sigma(r,\eta,\bar\sigma)=\left\{
\begin{array}{l}
\sigma_D, \qquad\qquad \mbox{for}\;\; 0\le r\le \rho(\eta),
\\[0.3cm]
  \widetilde U_1(r,\eta), \qquad \mbox{for}\;\; \rho(\eta)<r\le \eta,
  \\[0.3 cm]
  U_2(r,\eta),\qquad \mbox{for}\;\; \eta<r\le R(\eta,\bar\sigma),
\end{array}
\right.
\end{equation}
we see that for $\eta>\eta^*$ the triple 
$(\sigma, \rho, R)=(\widetilde \Sigma(r,\eta,\bar\sigma),  \rho(\eta), R(\eta,\bar\sigma))$ 
uniquely solves the problem $(\ref{2.1})_1$--$(\ref{2.1})_5$ on the interval $[0,R(\eta,\bar\sigma)]$ with $R(\eta,\bar\sigma)> \eta>\rho(\eta)>0$.

From Lemma \ref{lem2.2},  we see that  $R(\eta,\bar\sigma)$ is strictly increasing in $\eta$ for any given 
$\bar\sigma>\sigma_Q$.
Define
\begin{equation}\label{R*}
R^*(\bar\sigma):= R(\eta,\bar\sigma)\Big|_{\eta=\eta^*}
\quad \mbox{for\;\;} \bar\sigma>\sigma_Q.
\end{equation}
 Clearly, it is the critical radius such that problem $(\ref{2.2})_1$--$(\ref{2.2})_4$
 has a unique solution satisfying $\sigma(0)=\sigma_D$ for $R=R^*(\bar\sigma)$ and $\eta=\eta^*$.  Evidently,
 $R^*(\bar\sigma)>\eta^*>0$. 
 Similarly, for any fixed $\bar\sigma>\sigma_Q$,
  the mapping $\eta\mapsto R(\eta,\bar\sigma)$  is a 1-1 correspondence from $[\eta^*,+\infty)$ to $[R^*(\bar\sigma),+\infty)$.
So we can also regard $\rho$ and $\eta$ as  functions of $R$ and $\bar\sigma$, i.e.,    
 $\rho=\rho(R,\bar\sigma)$, $\eta=\eta(R,\bar\sigma)$ for
$\bar\sigma>\sigma_Q$ and $R\ge R^*(\bar\sigma)$.

Rewrite the solution
 $\sigma=\Sigma(r,R,\bar\sigma)$ for $0\le r\le R$, where
 \begin{equation*}
\Sigma(r,R,\bar\sigma)=\left\{
\begin{array}{l}
\sigma_D, \qquad\qquad\quad \,\mbox{for}\;\; 0\le r\le \rho(R,\bar\sigma),
\\[0.3cm]
  V_1(r,R,\bar\sigma), \qquad \mbox{for}\;\; \rho(R,\bar\sigma)< r\le \eta(R,\bar\sigma),
  \\[0.3 cm]
  V_2(r,R,\bar\sigma),\qquad \mbox{for}\;\; \eta(R,\bar\sigma)<r\le R,
\end{array}
\right.
\end{equation*}
with
\begin{equation*}
    V_1(r,R,\bar\sigma)=\widetilde{U}_1(r,\eta(R,\bar\sigma)),\qquad V_2(r,R,\bar\sigma)=U_2(r,\eta(R,\bar\sigma)).
\end{equation*}
  According to Lemma \ref{lem2.1} and Lemma \ref{lem2.2},  we conclude that for any $\bar\sigma>\sigma_Q$,
  problem $(\ref{2.1})_1$--$(\ref{2.1})_5$
  has a unique solution
  $(\sigma,\rho,\eta)=(\Sigma(r, R,\bar\sigma), \rho(R,\bar\sigma), \eta(R,\bar\sigma))$ if and only if  $R> R^*(\bar\sigma)$.

For any $\bar\sigma>\sigma_Q$ and $R\ge R^*(\bar\sigma)$,  define
\begin{equation}\label{F(R)}
  F(R,\bar\sigma):=\frac{1}{R^3}\Big[\int_{\eta(R,\bar\sigma)}^{R}S(V_2(r, R,\bar\sigma))r^2 dr-
  \frac{\nu_1}{3}\eta^3(R,\bar\sigma)-\frac{\nu_2-\nu_1}{3}\rho^3(R,\bar\sigma)\Big].
\end{equation}
Then problem $(\ref{2.1})$ is equivalent to equation $F(R,\bar\sigma)=0$.

Now we study the monotonicity of $F(R,\bar\sigma)$ in $R$ for fixed $\bar\sigma>\sigma_Q$. By taking variable transformation $s=r/R$, we rewrite
$$
\psi(R,\bar\sigma)=\rho(R,\bar\sigma)/R,\qquad \phi(R,\bar\sigma)=\eta(R,\bar\sigma)/R, \qquad 
\mathcal{V}(s,R,\bar\sigma)=\Sigma(sR,R,\bar\sigma).
$$
 Consider the following problem
\begin{equation}\label{2.24}
\left\{
\begin{array}{l}
 \displaystyle v''(s)+\frac{2}{s}v'(s) = R^2g(v)
  \qquad\mbox{for}\;\; \psi<s<\phi,
\\[0.3cm]
v(\psi)=\sigma_D, \quad
 v'(\psi)=0,\quad
v(\phi)=\sigma_Q,
\\[0.3 cm]
 \displaystyle v''(s)+\frac{2}{s}v'(s) = R^2f(v)
  \qquad\mbox{for}\;\; \phi<s<1,
\\[0.3 cm]
v'(\phi+0)=v'(\phi-0)= R\Phi(\phi R),
\\[0.3 cm]
  v(1)=\bar\sigma.
\end{array}
\right.
\end{equation}
  We have 

\medskip

\begin{lem}\label{lem2.3}
  Under assumptions $(A1)$, $(A3)$ and   $\bar\sigma>\sigma_Q$, problem $(\ref{2.24})$
   possesses a unique solution $(v,\psi,\phi)=(\mathcal{V}(s,R,\bar\sigma), \psi(R,\bar\sigma),\phi(R,\bar\sigma))$ for any $R\ge R^*(\bar\sigma)$. Furthermore, the solution satisfies:
   
   $(i)$ $\mathcal{V}(s,R,\bar\sigma)$ is strictly increasing and strictly convex in $s$.

  $(ii)$ $\mathcal{V}(s, R,\bar\sigma)$ is  strictly decreasing in R.

  $(iii)$ $\phi(R,\bar\sigma)$ and $\psi(R,\bar\sigma)$ are both continuous and strictly increasing for $R\ge R^*(\bar\sigma)$,
  and
  \begin{equation}\label{at R*}
  \psi(R^*(\bar\sigma),\bar\sigma)=0,\qquad \phi(R^*(\bar\sigma),\bar\sigma)=\eta^*/R^*(\bar\sigma),
  \end{equation}
  \begin{equation}\label{infty}
\lim_{R\to+\infty}\psi(R,\bar\sigma)=\lim\limits_{R\to+\infty}\phi(R,\bar\sigma)=1. 
    \end{equation}
\end{lem}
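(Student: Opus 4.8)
The plan is to read off all four assertions from the already-solved unscaled problem. Problem $(\ref{2.24})$ is precisely $(\ref{2.1})_1$--$(\ref{2.1})_5$ rewritten in the self-similar variable $s=r/R$ via $v(s)=\Sigma(sR,R,\bar\sigma)$; hence existence and uniqueness of $(\mathcal{V},\psi,\phi)$ for every $R\ge R^*(\bar\sigma)$ is inherited directly from the solvability of $(\ref{2.1})_1$--$(\ref{2.1})_5$ obtained through Lemma \ref{lem2.1} and Lemma \ref{lem2.2}, the endpoint $R=R^*(\bar\sigma)$ being the degenerate case $\psi=0$ (empty necrotic core). Assertion $(i)$ is then immediate: the affine substitution $r=sR$ preserves monotonicity and convexity, so the strict monotonicity and strict convexity in $r$ of $\widetilde U_1$ and $U_2$ (Lemma \ref{lem2.1}$(i)$ and Lemma \ref{lem2.2}$(i)$) pass to $\mathcal{V}$ in $s$ on $[\psi,1]$.

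For $(ii)$ I would linearize $(\ref{2.24})$ in $R$. Writing $w(s)=\mathcal{V}_R(s,R,\bar\sigma)$, on each shell $(\psi,\phi)$ and $(\phi,1)$ the function $w$ solves a linear equation $w''+\frac{2}{s}w'-c(s)w=h(s)$ with $c=R^2 g'(\mathcal{V})>0$, $h=2R g(\mathcal{V})>0$ inside and $c=R^2 f'(\mathcal{V})>0$, $h=2R f(\mathcal{V})>0$ outside (positivity of $h$ uses $g(\mathcal{V})\ge g(\sigma_D)>0$ and $f(\mathcal{V})\ge f(\sigma_Q)>0$). Differentiating the interface relations $\mathcal{V}(\psi,R)=\sigma_D$, $\mathcal{V}_s(\psi,R)=0$, $\mathcal{V}(\phi,R)=\sigma_Q$ and $\mathcal{V}(1,R)=\bar\sigma$ gives $w(1)=0$; $w(\psi)=0$ (the term $\mathcal{V}_s(\psi)\psi'(R)$ drops because $\mathcal{V}_s(\psi)=0$); and $w(\phi)=-\mathcal{V}_s(\phi)\phi'(R)$. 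Matching the flux $\mathcal{V}_s(\phi^-)=\mathcal{V}_s(\phi^+)$ and using the two equations at $\phi$ produces the transmission jump $w'(\phi^-)-w'(\phi^+)=R^2\big(f(\sigma_Q)-g(\sigma_Q)\big)\phi'(R)$, where $f(\sigma_Q)\ge g(\sigma_Q)$ by $(A3)$.

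The main obstacle is exactly this interface: the sign of the Dirichlet datum $w(\phi)$ is tied to the a priori unknown sign of $\phi'(R)$, whereas the monotonicity of $\phi$ is itself what one hopes to deduce from $(ii)$ --- a genuine circularity. I would break it by a consistency computation at $\phi$. On each shell write $w=A\,G_i+P_i$, where $G_i$ solves the homogeneous equation with $G_i=1$ at $\phi$ and $G_i=0$ at the far endpoint, and $P_i$ solves the forced equation with zero data; continuity forces the \emph{same} $A=w(\phi)$ on both sides. The maximum principle and Hopf's lemma give $G_1'(\phi)>0$, $G_2'(\phi)<0$, and $P_i<0$ in the interior with $P_1'(\phi^-)>0$, $P_2'(\phi^+)<0$. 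Substituting $w'(\phi^\pm)=A\,G_i'(\phi)+P_i'(\phi)$ and $\phi'(R)=-A/\mathcal{V}_s(\phi)$ into the jump relation yields $A\,[\,G_1'(\phi)-G_2'(\phi)+R^2(f(\sigma_Q)-g(\sigma_Q))/\mathcal{V}_s(\phi)\,]=P_2'(\phi)-P_1'(\phi)$, a negative number over a positive one, so $A=w(\phi)<0$ and hence $\phi'(R)>0$. With $w(\phi)<0$, $w(\psi)=w(1)=0$ and $h>0$, the maximum principle on each shell gives $w<0$ on $(\psi,1)$, which is $(ii)$.

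Finally, for $(iii)$ the monotonicity of $\phi$ and $\psi$ follows from $(ii)$ by comparison at level sets: for $R_1<R_2$ we have $\mathcal{V}(\cdot,R_1)>\mathcal{V}(\cdot,R_2)$, and evaluating this at $s=\phi(R_2)$ (respectively comparing the growth out of the zero-slope core edge, where a larger $R$ produces a steeper rise) forces $\phi(R_1)<\phi(R_2)$ and $\psi(R_1)<\psi(R_2)$; continuity of $\phi,\psi$ is inherited from that of the unscaled $\eta(R,\bar\sigma),\rho(R,\bar\sigma)$, and the values \eqref{at R*} are the definitions $\rho(\eta^*)=0$ and $\eta(R^*(\bar\sigma),\bar\sigma)=\eta^*$. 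For the limits \eqref{infty} I would use the crude lower bounds on $f,g$ above: integrating $(s^2v')'=R^2 s^2 f(v)$ over the outer shell gives $\bar\sigma-\sigma_Q=\int_\phi^1 v'\,ds\ge \frac{1}{3}R^2 f(\sigma_Q)\int_\phi^1\frac{s^3-\phi^3}{s^2}\,ds$, whose right-hand side diverges as $R\to+\infty$ unless $\phi(R)\to1$; the identical estimate on the inner shell, now that $\phi\to1$, then forces $\psi(R)\to1$.
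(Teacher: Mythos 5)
Your overall strategy coincides with the paper's: linearize problem \eqref{2.24} in $R$, run the maximum principle and Hopf's lemma shell by shell, and extract the limits \eqref{infty} from the integral identity $\bar\sigma-\sigma_Q=\int_\phi^1 v'\,ds$. The genuine difference is how the sign at the interface $s=\phi$ is pinned down. The paper argues by contradiction: assuming $\zeta=\partial\phi/\partial R\le 0$, the explicit linearized transmission conditions $(\ref{2.27})_4$ and $(\ref{2.27})_6$ --- which depend on the previously established facts $\Phi>0$, $\Phi'>0$ and $\Psi<0$ from \eqref{2.11}, \eqref{2.12}, \eqref{2.19} --- force $z(\phi)\ge 0$ and $z'(\phi+0)>0$, contradicting the maximum principle and Hopf's lemma on the outer shell. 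You instead decompose $w=A\,G_i+P_i$ on each shell and solve a scalar linear equation for $A=w(\phi)$, whose sign drops out of the Hopf-lemma signs of $G_i'(\phi)$ and $P_i'(\phi^\pm)$ together with $f(\sigma_Q)\ge g(\sigma_Q)$ from $(A3)$. Your jump relation $w'(\phi^-)-w'(\phi^+)=R^2\bigl(f(\sigma_Q)-g(\sigma_Q)\bigr)\phi'(R)$ is exactly the difference of $(\ref{2.27})_3$ and $(\ref{2.27})_6$, but you derive it directly from continuity of $\mathcal{V}_s$ and the jump of $\mathcal{V}_{ss}$ across $\phi$, bypassing the auxiliary computations \eqref{2.18}--\eqref{2.19} entirely. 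That is a legitimate and arguably more self-contained route (the paper's version has the advantage that $\Phi'>0$ and $\Psi<0$ get reused in Lemma \ref{lem2.4}). Your treatment of existence, of assertion $(i)$, of \eqref{at R*}, and of the limits \eqref{infty} matches the paper's.

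The one genuine gap is the strict monotonicity of $\psi$. The parenthetical ``a larger $R$ produces a steeper rise out of the zero-slope core edge'' is not an argument, and your level-set comparison does not transfer from $\phi$ to $\psi$: since $\mathcal{V}_s(\psi)=0$ and $\mathcal{V}\equiv\sigma_D$ on the core, comparing the values $\mathcal{V}(\psi(R_2),R_1)$ and $\mathcal{V}(\psi(R_2),R_2)=\sigma_D$ yields no contradiction when $\psi(R_2)\le\psi(R_1)$ (both sides equal $\sigma_D$). The correct step --- the one the paper takes via $(\ref{2.27})_2$ --- is to differentiate the free-boundary condition $\mathcal{V}_s(\psi(R),R)=0$ in $R$, which gives $w'(\psi)=-R^2 g(\sigma_D)\,\psi'(R)$ because $\mathcal{V}_{ss}(\psi)=R^2g(\sigma_D)$; since you have already shown $w<0$ on $(\psi,\phi)$ with $w(\psi)=0$, Hopf's lemma gives $w'(\psi)<0$ and hence $\psi'(R)>0$. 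All the ingredients for this are already in your setup, so the gap is easily closed.
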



\begin{proof} By taking variable transformation in problem $(2.3)_1$--$(2.3)_5$, it is easy to verify that $(v,\psi,\phi)=(\mathcal{V}(s,R,\bar\sigma), \psi(R,\bar\sigma),\phi(R,\bar\sigma))$ is the unique
solution of problem (\ref{2.24}).
The proof of the monotonicity and the convexity of 
$\mathcal{V}(s, R,\bar\sigma)$ in $s$ is 
similar as that of Lemma \ref{lem2.1} $(i)$,  we omit it here. So we mainly show the monotonicity 
of $\mathcal{V}(s, R,\bar\sigma)$, $\psi(R,\bar\sigma)$ and $\phi(R,\bar\sigma)$ in  $R$.

Denote 
$$
  z(s)=\frac{\partial \mathcal{V}}{\partial R}(s,R,\bar\sigma),\qquad \xi=\frac{\partial \psi(R,\bar\sigma)}{\partial R}, \qquad \zeta=\frac{\partial \phi(R,\bar\sigma)}{\partial R}.
$$
By the linearization of $(\ref{2.24})$, we see 
  $z(s)$, $\xi$ and $\zeta$ satisfy the following problem:

\begin{equation}\label{2.27}
\left\{
\begin{array}{l}
  \displaystyle z''(s)+\frac{2}{s}z'(s)=R^2g'(\mathcal{V})z + 2 R g(\mathcal{V})
  \qquad\mbox{for}\;\; \psi<s<\phi,
\\[0.3 cm]  
  z(\psi)=0,\quad z'(\psi)=-R^2g(\sigma_D)\xi,\quad 
  \\[0.3cm]
 z'(\phi-0)=\Phi(\phi R)+\phi R\Phi'(\phi R)+\Psi(\phi R)R^2\zeta,
  \\[0.3cm]
   z(\phi)=-R\Phi(\phi R)\zeta,
  \\[0.3cm]
  \displaystyle z''(s)+\frac{2}{s}z'(s)=R^2f'(\mathcal{V})z + 2 R f(\mathcal{V})
  \qquad\mbox{for}\;\; \phi<s<1,
\\[0.3 cm]  
 z'(\phi+0)=\Phi(\phi R)+\phi R\Phi'(\phi R)+\big(g(\sigma_Q)-f(\sigma_Q)
  +\Psi(\phi R)\big)R^2\zeta, 
\\[0.3 cm]
  z(1)=0,
\end{array}
\right.
\end{equation}
   where $\phi=\phi(R,\bar\sigma)$ and $\psi=\psi(R,\bar\sigma)$. In fact,
by $(\ref{2.24})_1$, $(\ref{2.24})_3$ and $(\ref{2.24})_4$, we have
\begin{equation*}
\left\{
\begin{array}{l}
     v''(\phi-0)=R^2g(\sigma_Q)-\displaystyle\frac{2}{\phi}R\Phi(\phi R),
     \\[0.3cm]
v''(\phi+0)=R^2f(\sigma_Q)-\displaystyle\frac{2}{\phi}R\Phi(\phi R),
    \\[0.3cm]
    z'(\phi\pm0)=\Phi(\phi R)+R\Phi'(\phi R)(\zeta R+\phi)-v''(\phi\pm0)\zeta.
\end{array}
\right.
\end{equation*}
Combining the above relations with $(\ref{2.18})$, one can derive $(\ref{2.27})_3$ and $(\ref{2.27})_6$,
other equations of $(2.27)$ are obvious.

To prove assertions $(ii)$ and $(iii)$, we use a  contradiction argument to show that
\begin{equation}\label{dandiao3}
    z(s)=\frac{\partial \mathcal{V}}{\partial R}<0 \quad \mbox{for}\;\; \psi<s<1, \quad \xi=\frac{\partial \psi}{\partial R}>0,\quad \zeta=\frac{\partial \phi}{\partial R}>0.
\end{equation}
If $\zeta\le0$, by $(A1)$, $(\ref{2.11})$, $(\ref{2.12})$, $(\ref{2.19})$, $(\ref{2.27})_4$ and $(\ref{2.27})_6$, we see that 
$z(\phi)\ge0$ and $z'(\phi+0)>0$. On the other hand, since $f(\mathcal{V})>0$ and $f'(\mathcal{V})>0$,  by applying strong maximum principle and Hopf Lemma to $(\ref{2.27})_4$--$(\ref{2.27})_7$, we get that $z'(\phi+0)<0$. This is a contradiction and thus $\zeta>0$, consequently we have $z(\phi)<0$. Combining $(\ref{2.27})_4$, $(\ref{2.27})_5$, $(\ref{2.27})_7$ and strong maximum principle, we have $z(s)<0$ for $\phi<s<1$.
By $(\ref{2.27})_1$, $(\ref{2.27})_2$, $(\ref{2.27})_4$, $g(\mathcal{V})>0$ and $g'(\mathcal{V})>0$, we can apply strong maximum principle again to deduce that $z(s)<0$ for $\psi<s<\phi$. By Hopf lemma, we  get that $z'(\psi)<0$,  which implies that $\xi>0$.

Finally, from integrating $(\ref{2.24})_3$--$(\ref{2.24})_5$ we have
$$
  \frac{\bar{\sigma}-\sigma_Q}{R^2}=\frac{\phi^2}{R}\Big(\frac{1}  {\phi}-1\Big)\Phi(\phi R)
  +\int_{\phi}^1\frac{1}{\alpha^2}\int_{\phi}^{\alpha}s^2f(\mathcal{V}(s,R,\bar\sigma))dsd\alpha,
$$
where $\phi=\phi(R,\bar\sigma)$.
If $\lim\limits_{R\to+\infty}\phi(R,\bar\sigma)\in(0,1)$, a contradiction can be obtained by taking limit $R\to+\infty$ in the above relation with noting that  $\mathcal{V}\ge\sigma_Q>0$ for $\phi(R,\bar\sigma)<s<1$ and $(\ref{2.11})$ hold.
Thus $\displaystyle\lim_{R\to+\infty}\phi(R,\bar\sigma)=1$. 

By integrating $(\ref{2.24})_1$--$(\ref{2.24})_2$, we also have
$$
\frac{\sigma_Q-\sigma_D}{R^2}=\int^{\phi(R,\bar\sigma)}_{\psi(R,\bar\sigma)}\frac{1}{\alpha^2}\int_{\psi(R,\bar\sigma)}^{\alpha}g(\mathcal{V}(s,R,\bar\sigma))s^2dsd\alpha.
$$
Likewise, there holds
$\displaystyle\lim_{R\to+\infty}\psi(R,\bar\sigma)=1$.
\end{proof} 
 
  \bigskip
With the help of Lemma \ref{lem2.3}, we  now study the  monotonicity of $F(R,\bar\sigma)$ with respect to $R$.
By  the variable transformation $r=sR$, we rewrite
$$
  F(R,\bar\sigma)
  =  \int_{\phi(R,\bar\sigma)}^1 S(\mathcal{V}(s,R,\bar\sigma))s^2ds-\frac{\nu_1}{3}\phi^3(R,\bar\sigma)-\frac{\nu_2-\nu_1}{3} \psi^3(R,\bar\sigma)
  \qquad \mbox{for}\;\; R\ge R^*(\bar\sigma).
$$
By $(A2)$, $(A3)$ and (\ref{dandiao3}), we see that  for every $\bar\sigma>\sigma_Q$ and  $R>R^*(\bar\sigma)$,
 \begin{equation}\label{F'}
   \frac{\partial F(R,\bar\sigma)}{\partial R}=\int_{\phi(R,\bar\sigma)}^1 S'(\mathcal{V})\frac{\partial \mathcal{V}}{\partial R}s^2ds
   -(S(\sigma_Q)+\nu_1)\phi^2\frac{\partial \phi}{\partial R}-(\nu_2-\nu_1)\psi^2\frac{\partial \psi}{\partial R}
  <0.
 \end{equation}
 From Lemma \ref{lem2.3} $(iii)$, we also have 
\begin{equation}\label{2.30}
\lim_{R\to+\infty} F(R,\bar\sigma)=-\frac{\nu_2}{3}<0.
\end{equation}

 Next, we need to determine the sign of $F(R^*(\bar\sigma),\bar\sigma)$, by treating $\bar\sigma$ as the variable.
 Recall $R^*=R^*(\bar\sigma)$ in $(\ref{R*})$ and denote $\phi^*=\phi^*(\bar\sigma)=\eta^*/R^*(\bar\sigma)$ where $\eta^*$ does not depend on $\bar\sigma$. They  are both regarded as  functions of $\bar\sigma$ for $\bar\sigma\in(\sigma_Q,+\infty)$. 
 We consider the following critical problem, which characterizes the nutrient concentration at the center of a tumor containing a quiescent core, 
 is exactly $\sigma_D$:
\begin{equation}\label{2.31}
\left\{
\begin{array}{l}   \displaystyle\sigma''(r)+\frac{2}{r}\sigma'(r) = g(\sigma)
  \qquad\mbox{for}\;\; 0<r<\eta^*,
\\[0.3 cm]
 \sigma'(0)=0, \quad \sigma(0)=\sigma_D,
 \quad
  \sigma(\eta^*)=\sigma_Q,
  \\[0.3 cm]
  \displaystyle\sigma''(r)+\frac{2}{r}\sigma'(r) = f(\sigma)
  \qquad\mbox{for}\;\; \eta^*<r<R^*,
\\[0.3 cm]
\sigma'(\eta^*-0)=\sigma'(\eta^*+0),\quad \sigma(R^*)=\bar\sigma. 
\end{array}
\right.
\end{equation}
Denote
$
 \mathcal{W}(s,\bar\sigma):=\mathcal{V}(s,R^*(\bar\sigma),\bar\sigma)
  $
  for $0<s<1$.
Then $(\mathcal{W}, \phi^*,R^*)$ satisfies
\begin{equation}\label{linjie}
\left\{
\begin{array}{l}
   \displaystyle W''(s)+\frac{2}{s}W'(s) = (R^*)^2g(W)
  \qquad\mbox{for}\;\; 0<s<\phi^*,
\\[0.3 cm]
 W'(0)=0, \quad W(0)=\sigma_D,
 \quad
  W(\phi^*)=\sigma_Q,
  \\[0.3 cm]
  \displaystyle W''(s)+\frac{2}{s}W'(s) = (R^*)^2f(W)
  \qquad\mbox{for}\;\; \phi^*<s<1,
\\[0.3 cm]
W'(\phi^*-0)=W'(\phi^*+0)=R^*\Phi(\phi^*R^*),
\\[0.3cm]
W(1)=\bar\sigma.
\end{array}
\right.
\end{equation}

\begin{lem}\label{lem2.4}
  Under assumptions $(A1)$--$(A3)$ and  $\bar\sigma>\sigma_Q$, 
   problem $(\ref{linjie})$ possesses a unique solution $(\mathcal{W},\phi^*,R^*)$  satisfying the following properties:
  
  $(i)$ $\mathcal{W}(s,\bar\sigma)$ and $R^*(\bar\sigma)$ are both strictly increasing in $\bar\sigma$, and $\phi^*(\bar\sigma)$ is strictly decreasing in $\bar\sigma$, i.e.,  
 \begin{equation}\label{2.33}  \frac{\partial \mathcal{W}}{\partial \bar\sigma}(s,\bar\sigma)>0\quad \mbox{for}\;\; s\in(0,1), \qquad \frac{d\phi^*}{d\bar\sigma}(\bar\sigma)<0,\qquad \frac{dR^*}{d\bar\sigma}(\bar\sigma)>0. \end{equation}
 Moreover,
 \begin{equation*}
     \lim\limits_{\bar\sigma\to+\infty}R^*(\bar\sigma)=+\infty, \qquad \lim\limits_{\bar\sigma\to\infty}\phi^*(\bar\sigma)=0.
 \end{equation*}

 $(ii)$ Define  
  $$
  \displaystyle
  \mathcal G(\bar\sigma):=F(R^*(\bar\sigma),\bar\sigma)
  =\int_{\phi^*(\bar\sigma)}^1 S(\mathcal{W}(s,\bar\sigma))s^2ds-
  \frac{\nu_1}{3}(\phi^*(\bar\sigma))^3\qquad
  \mbox{for}\;\;\bar\sigma>\sigma_Q.
$$
Then $\mathcal{G}(\bar\sigma)$ is strictly increasing in $\bar\sigma$.

\end{lem}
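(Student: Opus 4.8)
The plan is to remove all apparent $\bar\sigma$-dependence by passing back to the original radial variable $r$, where the critical profile turns out to be fixed. First note that at the critical radius one has $\rho(R^*,\bar\sigma)=0$ and $\eta(R^*,\bar\sigma)=\eta^*$, so $\widetilde U_1(\cdot,\eta^*)=U_1(\cdot,0)$ and the flux $\Phi(\eta^*)=(U_1)_r(\eta^*,0)$ are quantities that do not depend on $\bar\sigma$. Consequently the outer profile $U_2(\cdot,\eta^*)$ solving \eqref{2.16} with $\eta=\eta^*$ is a fixed function of $r$, and $R^*(\bar\sigma)$ is characterized simply by $U_2\big(R^*(\bar\sigma),\eta^*\big)=\bar\sigma$. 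Existence and uniqueness of $(\mathcal W,\phi^*,R^*)$ then follow immediately from Lemma \ref{lem2.2}$(i)$ (strict monotonicity of $U_2$ in $r$ together with $U_2\to+\infty$), and the rescaled solution is $\mathcal W(s,\bar\sigma)=U_1\big(sR^*(\bar\sigma),0\big)$ for $0<s\le\phi^*$ and $\mathcal W(s,\bar\sigma)=U_2\big(sR^*(\bar\sigma),\eta^*\big)$ for $\phi^*<s\le 1$, with $\phi^*=\eta^*/R^*(\bar\sigma)$. The crucial structural point is that all $\bar\sigma$-dependence of $\mathcal W$ enters only through the scaling factor $R^*(\bar\sigma)$.

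With this reduction, assertion $(i)$ is a short computation. Differentiating $U_2(R^*(\bar\sigma),\eta^*)=\bar\sigma$ in $\bar\sigma$ gives $\tfrac{dR^*}{d\bar\sigma}=1/(U_2)_r(R^*,\eta^*)>0$ since $(U_2)_r>0$ by Lemma \ref{lem2.2}$(i)$; as $\eta^*$ is constant, $\phi^*=\eta^*/R^*$ is then strictly decreasing, so $\tfrac{d\phi^*}{d\bar\sigma}<0$. For the profile, the chain rule yields $\tfrac{\partial\mathcal W}{\partial\bar\sigma}=s\,\tfrac{dR^*}{d\bar\sigma}\,(U_i)_r(sR^*,\cdot)$ on each subinterval, which is strictly positive for $s\in(0,1)$ because both $(U_1)_r$ and $(U_2)_r$ are positive there (by \eqref{daoshu} and Lemma \ref{lem2.2}$(i)$) and $\tfrac{dR^*}{d\bar\sigma}>0$; continuity across $s=\phi^*$ is ensured by the flux condition $(U_1)_r(\eta^*,0)=\Phi(\eta^*)=(U_2)_r(\eta^*,\eta^*)$. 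Finally, $U_2(R^*,\eta^*)=\bar\sigma\to+\infty$ forces $R^*\to+\infty$ (a bounded $R^*$ would keep $U_2$ bounded), whence $\phi^*=\eta^*/R^*\to0$.

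For assertion $(ii)$ I differentiate $\mathcal G$ by the Leibniz rule. The moving lower endpoint contributes the boundary term $-S\big(\mathcal W(\phi^*,\bar\sigma)\big)(\phi^*)^2\tfrac{d\phi^*}{d\bar\sigma}$; using the boundary condition $\mathcal W(\phi^*,\bar\sigma)=\sigma_Q$ from \eqref{linjie}, this combines with the derivative of $-\tfrac{\nu_1}{3}(\phi^*)^3$ to give
\[
\mathcal G'(\bar\sigma)=\int_{\phi^*}^1 S'(\mathcal W)\,\frac{\partial\mathcal W}{\partial\bar\sigma}\,s^2\,ds-\big(S(\sigma_Q)+\nu_1\big)(\phi^*)^2\,\frac{d\phi^*}{d\bar\sigma}.
\]
The integral is strictly positive because $S'>0$ by $(A2)$ and $\tfrac{\partial\mathcal W}{\partial\bar\sigma}>0$ from part $(i)$, while the boundary term is nonnegative since $S(\sigma_Q)+\nu_1\ge0$ by $(A3)$ and $\tfrac{d\phi^*}{d\bar\sigma}<0$; hence $\mathcal G'(\bar\sigma)>0$. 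The one place where care is needed is recognizing the fixed-profile reduction at the outset: without it one would have to linearize \eqref{linjie} directly in the $s$-variable and treat an elliptic problem whose coefficient $(R^*)^2$ itself depends on $\bar\sigma$, invoking the strong maximum principle and the Hopf lemma exactly as in the proof of Lemma \ref{lem2.3}. Reverting to $r$-coordinates, where the profile is $\bar\sigma$-independent, sidesteps that complication and makes both parts elementary.
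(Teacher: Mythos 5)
Your argument is correct, and for the key inequality $\frac{\partial\mathcal W}{\partial\bar\sigma}>0$ it takes a genuinely different and more elementary route than the paper. The paper proves \eqref{2.33} by linearizing the rescaled critical problem \eqref{linjie} in $\bar\sigma$, arriving at the transmission system \eqref{fanzheng} for $z=\partial\mathcal W/\partial\bar\sigma$, and then running a contradiction argument with the strong maximum principle separately on $(0,\phi^*)$ and $(\phi^*,1)$ (using \eqref{2.11} and $(A3)$ to get the sign of $z'(\phi^*+0)$ via \eqref{FUhAO}). You instead observe that in the original variable $r$ the critical profile is the \emph{fixed} $C^1$ function equal to $U_1(r,0)$ on $[0,\eta^*]$ and $U_2(r,\eta^*)$ on $[\eta^*,+\infty)$ --- fixed because $\eta^*$ and $\Phi(\eta^*)$ are determined by $f,g,\sigma_D,\sigma_Q$ alone, which is exactly the paper's \eqref{independence} --- so that $\mathcal W(s,\bar\sigma)$ is this profile evaluated at $sR^*(\bar\sigma)$ and the positivity of $\partial\mathcal W/\partial\bar\sigma$ drops out of the chain rule from $(U_1)_r,(U_2)_r>0$ and $dR^*/d\bar\sigma>0$; the $C^1$ matching at $r=\eta^*$ via the flux condition is the one point needed to make the chain rule legitimate across $s=\phi^*$, and you note it. Your treatment of $dR^*/d\bar\sigma$, $d\phi^*/d\bar\sigma$, the limits as $\bar\sigma\to+\infty$, and assertion $(ii)$ (Leibniz rule, boundary term combined with $-\tfrac{\nu_1}{3}(\phi^*)^3$ using $S(\sigma_Q)\ge-\nu_1$ and $\bar\xi<0$) coincides with the paper's, e.g.\ \eqref{R^*increasing} and \eqref{xi zeta}. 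What your reduction buys is the elimination of the maximum-principle machinery for this particular lemma; what it costs is generality --- it works precisely because the inner boundary data at the critical configuration are $\bar\sigma$-independent, which is special to Lemma \ref{lem2.4} and does not extend to Lemma \ref{lem2.3}, where the paper's linearization method is genuinely needed. As a consistency check, your formula gives $z(1)=(U_2)_r(R^*,\eta^*)\cdot dR^*/d\bar\sigma=1$ and $z(0)=0$, matching the boundary conditions in \eqref{fanzheng}.
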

\begin{proof}
Note that 
$$\displaystyle \Phi(\eta^*)=\frac{1}{(\eta^*)^2}\int_0^{\eta^*} g(\widetilde{U}_1(\tau,\eta^*))\tau^2d\tau \qquad \mbox{for}\;\; 
 \eta^*>0.
 $$
From $(\ref{shoot})$ and $(\ref{eta*})$, we obtain  that
$\eta^*=\phi^*(\bar\sigma)R^*(\bar\sigma)$
is independent of $\bar\sigma$ and  $\eta^*$ is 
actually determined by the parameters  $\sigma_Q$ and $\sigma_D$.
 Consequently,  $\Phi(\eta^*)$ is also independent of $\bar\sigma$, resulting in 
\begin{equation}\label{independence}
\frac{\partial \Phi}{\partial \bar\sigma}(\eta^*)=0. 
 \end{equation}
  From $(\ref{shoot2})$, we have 
\begin{equation}\label{2.35}
    U_2(R^*(\bar\sigma),\eta^*)=\bar\sigma.
\end{equation}
By differentiating $(\ref{2.35})$ with respect to $\bar\sigma$ and Lemma \ref{lem2.2} $(i)$, we have 
\begin{equation}\label{R^*increasing}
\frac{dR^*}{d\bar\sigma} (\bar\sigma)=\frac{1}{(U_2)_r(R^*(\bar\sigma),\eta^*)}>0.
\end{equation}
Using Lemma \ref{lem2.2} $(i)$,  the property that $R^* (\bar\sigma)$ is strictly increasing in $\bar\sigma$ and $(\ref{2.35})$,
we  easily obtain 
\begin{equation}\label{2.37}
\displaystyle\lim\limits_{\bar\sigma\to+\infty}R^*(\bar\sigma)=+\infty  .
\end{equation}
For every $\bar\sigma>\sigma_Q$,  denote 
$$\displaystyle z(s):=\frac{\partial \mathcal{W}}{\partial \bar\sigma}(s,\bar\sigma),\quad  
  \bar\zeta:=\frac{dR^*}{d\bar\sigma} (\bar\sigma),\quad \bar\xi:=\frac{d\phi^*}{d\bar\sigma}(\bar\sigma).
  $$
By differentiating $R^*(\bar\sigma)\phi^*(\bar\sigma)=\eta^*$ with respect to $\bar\sigma$,  using (\ref{R^*increasing}) and the fact that $\eta^*$ is independent of $\bar\sigma$, we have 
\begin{equation}\label{xi zeta}
    \bar\xi=-\frac{\eta^*}{(R^*(\bar\sigma))^2}\bar\zeta<0.
\end{equation}
 Thus $\phi^*(\bar\sigma)$ is strictly decreasing in $\bar\sigma$ and due to $(\ref{2.37})$,  
\begin{equation}
\lim\limits_{\bar\sigma\to+\infty}\phi^*(\bar\sigma)=0.    
\end{equation}
  With $(\ref{linjie})$ and $(\ref{independence})$, one can verify that there holds
\begin{equation}\label{fanzheng}
\left\{ 
\begin{array}{l}
 \displaystyle z''(s)+\frac{2}{s}z'(s)=(R^*)^2g'(\mathcal{W})z+2 R^* \bar\zeta g(\mathcal{W})
  \qquad\mbox{for}\;\; 0<s<\phi^*,
  \\[0.3cm]
  z'(0)=0,\qquad z(0)=0, \qquad z(\phi^*)=-R^*\Phi(\phi^*R^*)\bar\xi,
\\[0.3cm]
z'(\phi^*-0)=\bar\zeta\Phi(\phi^*R^*)+\bar\xi\Big(\displaystyle\frac{2}{\phi^*}R^*\Phi(\phi^*R^*)-(R^*)^2g(\sigma_Q)\Big),
\\[0.3cm]
  \displaystyle z''(s)+\frac{2}{s}z'(s)=
  (R^*)^2f'(\mathcal{W})z+2 R^*\bar\zeta f(\mathcal{W})
  \qquad\mbox{for}\;\; \phi^*<s<1,
\\[0.3 cm]  
z'(\phi^*+0)=\bar\zeta\Phi(\phi^*R^*)+\bar\xi\Big(\displaystyle\frac{2}{\phi^*}R^*\Phi(\phi^*R^*)-(R^*)^2f(\sigma_Q)\Big),
\\[0.3cm]
  z(1)=1.\qquad 
\end{array}
\right.
\end{equation}
In fact, from  $(\ref{linjie})_1$, 
$(\ref{linjie})_3$ and $(\ref{linjie})_4$, we see
\begin{equation*}
\left\{
    \begin{array}{l}
       W''(\phi^*-0)=(R^*)^2g(\sigma_Q)-\displaystyle\frac{2}{\phi^*}R^*\Phi(\phi^*R^*),
       \\[0.3cm]
          W''(\phi^*+0)=(R^*)^2f(\sigma_Q)-\displaystyle\frac{2}{\phi^*}R^*\Phi(\phi^*R^*),  
          \\[0.3cm]
z'(\phi^*\pm0)=\bar\zeta\Phi(\phi^*R^*)+R^*\partial_{\bar\sigma}\Big(\Phi(\phi^*R^*)\Big)-\bar\xi W''(\phi^*\pm0).   
    \end{array}
    \right.
\end{equation*}
 Then  from $(\ref{linjie})_4$ and $(\ref{independence})$, we get  $(\ref{fanzheng})_3$ and $(\ref{fanzheng})_5$. Similarly, the others of $(2.40)$ can be checked.

 Based on a standard contradiction argument and maximum principle, we show that 
\begin{equation}\label{dandiao4}
  z(s)=\frac{\partial \mathcal{W}}{\partial \bar\sigma}(s,\bar\sigma)> 0\quad \mbox{for} \;\; 0<s<1.
\end{equation}
We first prove that $z(s)>0$ for $s\in(0,\phi^*)$.
By
$(\ref{fanzheng})_1$--$(\ref{fanzheng})_2$, we obtain 
$$
3z''(0)=2R^*(\bar\sigma)g(\sigma_D)\bar\zeta>0.
$$ 
If there exists a  $s_0\in(0,\phi^*)$ such that $z(s_0)\le0$, then $z(s)$ must attain a positive maximum in $(0,s_0)$, which contradicts $(\ref{fanzheng})_1$.  Therefore,
\begin{equation*}
    z(s)=\frac{\partial\mathcal{W}}{\partial\bar\sigma}(s,\bar\sigma)>0 \quad \mbox{for} \;\; 0<s<\phi^*.
\end{equation*}
By $(\ref{2.11})$ and assumption $(A3)$, we have 
\begin{equation}\label{FUhAO}
    \displaystyle\frac{2}{\phi^*}R^*\Phi(\phi^*R^*)-(R^*)^2f(\sigma_Q)\le\frac{2}{3}(R^*)^2g(\sigma_Q)-(R^*)^2f(\sigma_Q)<0.
\end{equation}
With $(\ref{2.11})$, $\bar\xi<0$, $\bar\zeta>0$ and $(\ref{FUhAO})$,  there hold
$$
z(\phi^*)>0,\qquad z'(\phi^*+0)>0,\qquad z(1)=1.
$$
If there exists some $s_1\in(\phi^*,1)$ such that $z(s_1)\le0$, then $z(s)$ attains a positive maximum in the interval $(\phi^*,s_1)$. Combining $(\ref{fanzheng})_4$, there is a contradiction, which implies that $z(s)>0$ for $s\in(\phi^*,1)$.
This completes the proof of the assertion $(i)$.

According to (\ref{xi zeta}), $(\ref{dandiao4})$, 
$(A2)$ and $(A3)$, we have
$$
\mathcal{G}'(\bar\sigma)=\int_{\phi^*}^1
S'(\mathcal{W})z(s)s^2ds-\Big(S(\sigma_Q)+\nu_1\Big)(\phi^*)^2\bar\xi>0  \qquad \mbox{for}\;\; \bar\sigma>\sigma_Q.
$$
 We get assertion $(ii)$. The proof is complete.
\end{proof}
\medskip

\begin{lem}\label{lem2.5} Let $\bar\sigma>\sigma_Q$.
  Under assumptions $(A1)$--$(A3)$, there
  exists a critical nutrient value $\sigma^{*} \in (\tilde{\sigma}, +\infty)$ such that equation $F(R, \bar{\sigma}) = 0$ has a unique root $R_{s}=R_s(\bar\sigma) \in (R^{*}(\bar{\sigma}), +\infty)$ 
if and only if  $\bar{\sigma} > \sigma^{*}$.  
\end{lem}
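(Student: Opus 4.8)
The plan is to reduce the whole statement to a sign analysis of the single-variable function $\mathcal G$ introduced in Lemma \ref{lem2.4}. Fix $\bar\sigma>\sigma_Q$. By \eqref{F'} the map $R\mapsto F(R,\bar\sigma)$ is continuous on $[R^*(\bar\sigma),+\infty)$ and strictly decreasing on $(R^*(\bar\sigma),+\infty)$, with endpoint value $F(R^*(\bar\sigma),\bar\sigma)=\mathcal G(\bar\sigma)$ and, by \eqref{2.30}, $\lim_{R\to+\infty}F(R,\bar\sigma)=-\nu_2/3<0$. Hence if $\mathcal G(\bar\sigma)>0$ the intermediate value theorem plus strict monotonicity yield exactly one root $R_s\in(R^*(\bar\sigma),+\infty)$, whereas if $\mathcal G(\bar\sigma)\le0$ strict monotonicity forces $F(R,\bar\sigma)<\mathcal G(\bar\sigma)\le0$ for all $R>R^*(\bar\sigma)$, so no root exists in the open interval. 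Thus $F(\cdot,\bar\sigma)=0$ has a unique root in $(R^*(\bar\sigma),+\infty)$ if and only if $\mathcal G(\bar\sigma)>0$, and it remains to show that $\{\bar\sigma:\mathcal G(\bar\sigma)>0\}=(\sigma^*,+\infty)$ for some $\sigma^*>\tilde\sigma$.

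Next I record the qualitative behavior of $\mathcal G$. Lemma \ref{lem2.4}(ii) already gives that $\mathcal G$ is continuous and strictly increasing on $(\sigma_Q,+\infty)$, so the superlevel set $\{\mathcal G>0\}$ is automatically a half-line as soon as $\mathcal G$ attains both a negative and a positive value. For the negative value I evaluate at $\bar\sigma=\tilde\sigma$, which is legitimate since $\tilde\sigma>\sigma_Q$ by $(A3)$: the profile $\mathcal W(\cdot,\tilde\sigma)$ is strictly increasing in $s$ with $\mathcal W(1,\tilde\sigma)=\tilde\sigma$, hence $\mathcal W(s,\tilde\sigma)<\tilde\sigma$ for $s\in(\phi^*,1)$, so $S(\mathcal W(s,\tilde\sigma))<S(\tilde\sigma)=0$ there by $(A2)$. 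Together with the strictly negative term $-\tfrac{\nu_1}{3}(\phi^*)^3$ this yields $\mathcal G(\tilde\sigma)<0$.

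For the positive value I exploit that in physical variables $\mathcal G$ can be written through a profile that does not depend on $\bar\sigma$. Since $\eta^*$ is fixed and $R^*(\bar\sigma)\phi^*(\bar\sigma)=\eta^*$, the change of variables $r=sR^*$ turns $\mathcal G(\bar\sigma)$ into $\frac{1}{(R^*)^3}\big[\int_{\eta^*}^{R^*}S(U_2(r,\eta^*))r^2\,dr-\tfrac{\nu_1}{3}(\eta^*)^3\big]$, where $U_2(\cdot,\eta^*)$ is the single fixed profile of Lemma \ref{lem2.2} and $R^*=R^*(\bar\sigma)\to+\infty$ as $\bar\sigma\to+\infty$ by Lemma \ref{lem2.4}(i). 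Because $U_2(r,\eta^*)\to+\infty$ while $S$ is increasing with $S(\tilde\sigma)=0$, fixing any $\theta>\tilde\sigma$ gives $S(U_2(r,\eta^*))\ge S(\theta)=:c>0$ for all $r$ beyond some $r_0$; splitting the integral at $r_0$ and discarding the bounded contribution on $[\eta^*,r_0]$ then yields $\liminf_{\bar\sigma\to+\infty}\mathcal G(\bar\sigma)\ge c/3>0$. In particular $\mathcal G$ is positive for all large $\bar\sigma$.

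Finally I combine these facts: $\mathcal G$ is continuous and strictly increasing with $\mathcal G(\tilde\sigma)<0$ and $\mathcal G(\bar\sigma)>0$ for large $\bar\sigma$, so the intermediate value theorem produces a unique $\sigma^*\in(\tilde\sigma,+\infty)$ with $\mathcal G(\sigma^*)=0$, and strict monotonicity gives $\mathcal G(\bar\sigma)>0\Longleftrightarrow\bar\sigma>\sigma^*$; the strict inequality $\sigma^*>\tilde\sigma$ follows because $\mathcal G(\tilde\sigma)<0=\mathcal G(\sigma^*)$. Feeding this equivalence back into the reduction of the first paragraph completes the proof. I expect the one genuinely delicate point to be the large-$\bar\sigma$ positivity of $\mathcal G$: it is the place where I must leave the comfort of the fixed-$\bar\sigma$ monotonicity and quantitatively control the growth of the proliferating profile $U_2(\cdot,\eta^*)$, and rewriting $\mathcal G$ in the $\bar\sigma$-independent physical form is the key trick that makes this estimate tractable.
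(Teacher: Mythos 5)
Your proposal is correct and follows essentially the same route as the paper: reduce the root question to the sign of $\mathcal G(\bar\sigma)=F(R^*(\bar\sigma),\bar\sigma)$ via the strict monotonicity of $F$ in $R$ and the limit $-\nu_2/3$, show $\mathcal G(\tilde\sigma)<0$ from $S(\mathcal W)<S(\tilde\sigma)=0$, obtain positivity for large $\bar\sigma$ by rewriting $\mathcal G$ through the $\bar\sigma$-independent profile $U_2(\cdot,\eta^*)$, and conclude with the monotonicity of $\mathcal G$ from Lemma \ref{lem2.4}$(ii)$. Your direct $\liminf$ estimate on $\mathcal G$ is a slightly more quantitative version of the paper's computation of $\lim_{\bar\sigma\to+\infty}(R^*(\bar\sigma))^3\mathcal G(\bar\sigma)$, but the underlying idea is identical (and you correctly use $S(\tilde\sigma)=0$ where the paper's text has a typo reading $g(\tilde\sigma)=0$).
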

\begin{proof} 
 From Lemma \ref{lem2.3}, we see that for every given $\bar\sigma>\sigma_Q$,
 \begin{equation*}
     \sigma_Q<\mathcal{W}(s,\bar\sigma)<\bar\sigma \quad \mbox{for}\;\;\phi^*<s<1,
 \end{equation*}
  which with $(A2)$ and $(A3)$  implies that  $\mathcal G(\tilde\sigma)<g(\tilde \sigma)=0$.  Moreover, by $(\ref{2.37})$, the definition of $\mathcal{G}$ in Lemma \ref{2.4} $(ii)$ and variable transformation, we find 
  $$
\lim\limits_{\bar\sigma\to+\infty}\big(R^*(\bar\sigma)\big)^3\mathcal{G}(\bar\sigma)=\lim\limits_{R^*\to+\infty}\Big(\int_{\eta^*}^{R^*}S(U_2(r,\eta^*))r^2dr-\frac{\nu_1}{3}(\eta^*)^3\Big),
  $$
 which combined with $(A2)$, implies
   that $\mathcal G(+\infty)>0$.  Thus there exists a unique  $\sigma^*>\tilde\sigma$ such that 
\begin{equation}\label{2.43}
 F(R^*(\bar\sigma),\bar\sigma)=\mathcal G(\bar\sigma)
  \left\{
  \begin{array}{ll}
  >0,\qquad\mbox{for}\;\;  \bar\sigma>\sigma^*,
  \\ 
   =0, \qquad\mbox{for}\;\; \bar\sigma=\sigma^*,
   \\ 
   <0,\qquad\mbox{for}\;\; \sigma_Q<\bar\sigma<\sigma^*.
\end{array}
\right.
\end{equation}
By $(\ref{F'})$, $(\ref{2.30})$, $(\ref{2.43})$, we conclude that for $\bar\sigma>\sigma_Q$,  the equation $F(R,\bar\sigma)=0$   has a unique solution 
 $R_s\in( R^*(\bar\sigma),+\infty)$ if and only if $\bar\sigma>\sigma^*$. 
\end{proof}
\medskip

Next, we solve problem $(\ref{2.3})$  and  problem $(\ref{2.2})$.
The two-layer problem $(\ref{2.2})$ without
the condition $\sigma(0)\ge\sigma_D$ has 
been well studied in \cite{wu-xu}.  Recalling $(\ref{shoot})$--$(\ref{eta*})$, $(\ref{shoot2})$ and $(\ref{R*})$, for every fixed $\bar\sigma>\sigma_Q$,  we know that $R^*=R^*(\bar\sigma)$ is a threshold radius for the two-layer  tumor structure with $\sigma(0)=\sigma_D$. For $R>R^*(\bar\sigma)$, the tumor has a three-layer structure with a necrotic core. Similarly, by analyzing the problem $(\ref{2.2})_1$--$(\ref{2.2})_4$, we can prove that for every $\bar\sigma>\sigma_Q$,     there exists another critical tumor radius $R_*=R_*(\bar\sigma)$ corresponding to the one-layer tumor with only proliferating cells and $\sigma(0)=\sigma_Q$. 
\medskip

 \begin{lem}\label{lem2.6}
Under  assumption $(A1)$ and $\bar\sigma>\sigma_Q$,  there exists a unique critical radius $R_*=R_*(\bar\sigma)\in(0,R^*(\bar\sigma))$ which is strictly increasing in $\bar\sigma$ such that

$(i)$ For any $R\in(R_*(\bar\sigma),R^*(\bar\sigma)]$, there exists   a unique solution $(\sigma_1(r,R,\bar\sigma),\eta(R,\bar\sigma))$ of problem $(\ref{2.2})_1$--$(\ref{2.2})_4$.

$(ii)$ For any  $R\in(0,R_*(\bar\sigma)]$, there exists   a unique solution 
$\sigma_2(r,R,\bar\sigma)$  of problem $(\ref{2.3})_1$--$(\ref{2.3})_2$.
 \end{lem}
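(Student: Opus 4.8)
The plan is to reduce each of the two boundary value problems to a one-parameter shooting problem from the tumour centre, and then to read off $R_*$ from the degenerate configuration in which the quiescent core just disappears, reusing the initial value problem machinery of Lemmas \ref{lem2.1}--\ref{lem2.3}.

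First I would dispose of part $(ii)$. For each centre value $a\in[\sigma_Q,\bar\sigma)$ the initial value problem
\begin{equation*}
u''(r)+\frac{2}{r}u'(r)=f(u),\qquad u(0)=a,\quad u'(0)=0,
\end{equation*}
has, exactly as in Lemma \ref{lem2.1}$(i)$ and Lemma \ref{lem2.2}$(i)$, a unique global solution $P(r,a)$ that is strictly increasing and convex in $r$ with $P(r,a)\to+\infty$. Differentiating in $a$, the function $v=\partial P/\partial a$ solves $v''+(2/r)v'=f'(P)v$ with $v(0)=1$, $v'(0)=0$; since $f'>0$, integrating $(r^2v')'=r^2f'(P)v$ gives $v>0$, so $P$ is strictly increasing in $a$. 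Hence the radius $\mathcal R(a)$ fixed by $P(\mathcal R(a),a)=\bar\sigma$ is well defined, continuous and strictly decreasing, with $\mathcal R(a)\to0$ as $a\to\bar\sigma^{-}$. Setting $R_*(\bar\sigma):=\mathcal R(\sigma_Q)$, the map $\mathcal R$ is a decreasing bijection of $[\sigma_Q,\bar\sigma)$ onto $(0,R_*(\bar\sigma)]$; inverting it gives for each $R\in(0,R_*]$ a unique $a=a(R)\ge\sigma_Q$ and the unique one-layer solution $\sigma_2(r,R,\bar\sigma)=P(r,a(R))$. As the profile $P(\cdot,\sigma_Q)$ is independent of $\bar\sigma$ and strictly increasing in $r$, the identity $P(R_*(\bar\sigma),\sigma_Q)=\bar\sigma$ shows that $R_*(\bar\sigma)$ is strictly increasing in $\bar\sigma$.

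For part $(i)$ I would shoot from a centre value $b\in[\sigma_D,\sigma_Q)$: solve the $g$-equation with $u(0)=b$, $u'(0)=0$ up to the first radius $\eta(b)$ at which the solution equals $\sigma_Q$, and then continue across $r=\eta(b)$ with the $f$-equation under the matched-flux condition, a continuation of the type solved in Lemma \ref{lem2.2}. Let $R(b)$ denote the radius at which the value first reaches $\bar\sigma$. The endpoint $b=\sigma_D$ reproduces the construction preceding \eqref{R*}, so $R(\sigma_D)=R^*(\bar\sigma)$; and as $b\to\sigma_Q^{-}$ the quiescent radius $\eta(b)\to0$ and the whole profile converges to $P(\cdot,\sigma_Q)$, whence $R(b)\to R_*(\bar\sigma)$ by continuous dependence. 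The substance of part $(i)$ is that $R(b)$ is continuous and strictly decreasing on $[\sigma_D,\sigma_Q)$; strict monotonicity then yields $R_*(\bar\sigma)<R^*(\bar\sigma)$ (since $\sigma_D<\sigma_Q$) and makes $b\mapsto R(b)$ a bijection of $[\sigma_D,\sigma_Q)$ onto $(R_*,R^*]$, whose inverse furnishes for each $R\in(R_*,R^*]$ the unique pair $(\sigma_1(r,R,\bar\sigma),\eta(R,\bar\sigma))$.

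The monotonicity of $R(b)$ is the main obstacle, and it is where the argument must mimic Lemma \ref{lem2.3}. I would argue by contradiction: setting $z=\partial\mathcal V/\partial b$ in the rescaled variable $s=r/R(b)$, with $\beta=R'(b)$ and $\zeta=\partial\phi/\partial b$ at the interface $s=\phi(b)=\eta(b)/R(b)$, one linearises the two-phase problem and supposes $\beta\ge0$ (so that, since $\eta(b)$ is decreasing in $b$, one checks $\zeta<0$ and hence $z(\phi)=-\mathcal V'(\phi)\zeta>0$). On the outer $f$-phase the strong maximum principle and the Hopf lemma then force $z'(\phi+0)<0$, while on the inner $g$-phase the relation $(s^2z')'\ge0$ together with $z(0)=1$ forces $z>0$ and $z'(\phi-0)\ge0$. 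These two conclusions are reconciled only through the induced jump $z'(\phi+0)-z'(\phi-0)=R^2\big(g(\sigma_Q)-f(\sigma_Q)\big)\zeta$ at the free interface, exactly the term handled in Lemma \ref{lem2.3}; since $\zeta<0$ under the hypothesis $\beta\ge0$, the sign $f(\sigma_Q)\ge g(\sigma_Q)$ renders this jump nonnegative and produces the contradiction, so $R'(b)<0$. Controlling this interface term is the delicate step; once it is settled, continuity of $R(b)$ follows from continuous dependence of the shooting solutions on $b$, and the remaining bijection and inversion arguments are routine.
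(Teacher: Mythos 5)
Your proposal is correct in substance, but it parametrizes the family of admissible profiles differently from the paper. The paper's proof shoots on the quiescent-core radius $\eta$: it solves the inner layer as a two-point boundary value problem \eqref{shangxiajie} by upper and lower solutions to get $U_3(r,\eta)$, shows $U_3$ is decreasing in $\eta$ so that the constraint $U_3(0,\eta)\ge\sigma_D$ pins down the admissible range $\eta\in(0,\eta^*]$, defines the interface flux $\widetilde\Phi(\eta)$ as in \eqref{2.45}, continues outward with an initial value problem to get $U_4(r,\eta)$ decreasing in $\eta$ (by the same unrescaled linearization at the interface as in Lemma \ref{lem2.2}, which is where $f(\sigma_Q)\ge g(\sigma_Q)$ enters), and concludes that $\eta\mapsto R(\eta,\bar\sigma)$ is an increasing bijection of $(0,\eta^*]$ onto $(R_*(\bar\sigma),R^*(\bar\sigma)]$ with $R_*$ defined by \eqref{R_*}; monotonicity of $R_*$ in $\bar\sigma$ is quoted from \cite{wu-xu}. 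You instead shoot on the centre concentration ($a\in[\sigma_Q,\bar\sigma)$ for part $(ii)$, $b\in[\sigma_D,\sigma_Q)$ for part $(i)$), which is equivalent via the decreasing correspondence $b\leftrightarrow\eta$, and you prove the key monotonicity $R'(b)<0$ by a rescaled two-phase linearization in the spirit of Lemma \ref{lem2.3} rather than the paper's one-phase argument in the spirit of Lemma \ref{lem2.2}; the decisive ingredient is the same in both, namely the interface jump $z'(\phi+0)-z'(\phi-0)=R^2\big(g(\sigma_Q)-f(\sigma_Q)\big)\zeta$ combined with the maximum principle and the Hopf lemma. What your route buys is a cleaner, self-contained proof that $R_*(\bar\sigma)$ is strictly increasing (from $P(R_*(\bar\sigma),\sigma_Q)=\bar\sigma$ with $P(\cdot,\sigma_Q)$ independent of $\bar\sigma$ and increasing in $r$), where the paper defers to an external reference; what the paper's route buys is that the inner layer is handled once and for all by upper and lower solutions, avoiding your limiting argument $b\to\sigma_Q^-$, $\eta(b)\to 0$, which requires continuous dependence for the singular initial value problem at $r=0$ (the paper sidesteps this by simply setting $\widetilde\Phi(0)=0$ by continuity).

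Two minor points. First, your inner-phase step ``$(s^2z')'\ge 0$ together with $z(0)=1$ forces $z>0$'' is circular as written, since $(s^2z')'\ge 0$ already presupposes $z\ge 0$; it is repaired by the standard first-crossing argument ($z(0)=1$, $z'(0)=0$, and as long as $z>0$ one has $(s^2z')'>0$, hence $z'>0$, hence $z$ increasing), which is evidently what you intend. Second, both your argument and the paper's rely on $f(\sigma_Q)\ge g(\sigma_Q)$, i.e.\ on $(A3)$, even though the lemma is stated under $(A1)$ alone; this is a (harmless) imprecision inherited from the paper, not a defect of your proof.
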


 \begin{proof}
     For any given $\eta\ge0$, we consider
\begin{equation}\label{shangxiajie}
\left\{
\begin{array}{l}
  \displaystyle u''(r)+\frac{2}{r}u'(r) = g(u(r))
  \qquad\mbox{for}\;\; 0<r<\eta,
\\[0.3 cm]
u'(0)=0,\qquad u(\eta)=\sigma_Q.
\end{array}
\right.
\end{equation}
Since $0$ and $\sigma_Q$ are the lower and upper solutions to  problem $(\ref{shangxiajie})$, respectively,  we have from the upper and lower solution method that problem $(\ref{shangxiajie})$ admits a unique solution $u=U_3(r,\eta)$ defined on $[0,\eta]$ with $0\le U_3(r,\eta)\le\sigma_Q$ for $r\in[0,\eta]$. By integrating $(\ref{shangxiajie})$, $u=U_3(r,\eta)$ is strictly increasing in $r$  for every $\eta\ge0$. Define 
$$
\widetilde\Phi(\eta):=(U_3)_r(\eta,\eta)=\frac{1}{\eta^2}\int_0^\eta g(U_3(\tau,\eta))\tau^2d\tau \qquad\mbox{for}\;\; \eta>0.
$$
 Following analogous arguments as $(\ref{2.11})$--$(\ref{2.15})$, we derive that $U_3(r,\eta)$ is strictly decreasing in $\eta$,   and (cf. Lemma 2.1 in \cite{wu-xu}),
\begin{equation}\label{2.45}
0<\widetilde\Phi(\eta)<\frac{1}{3}g(\sigma_Q)\eta\quad \mbox{and} \quad \widetilde\Phi'(\eta)>0\quad \text{for} \;\; \eta>0.    
\end{equation}
  By the monotonicity of $U_3(r,\eta)$ in $\eta$, $(\ref{shoot})$ and $(\ref{eta*})$,   $U_3(0,\eta)\ge\sigma_D$ if and only if $0<\eta\le\eta^*$. Particularly, $U_3(0,\eta^*)=\sigma_D$.

 We conclude that for every $\eta\in(0,\eta^*]$ problem $(\ref{2.2})_1$--$(\ref{2.2})_2$ possesses a unique solution $u=U_3(r,\eta)$ for $0\le r\le\eta$. 

By $(\ref{2.45})$ and continuity, we set $\widetilde\Phi(0)=0$.
 To solve problem $(\ref{2.2})_3$--$(\ref{2.2})_4$,  we next consider for  given $\eta\in[0,\eta^*]$  the following initial value problem:
    \begin{equation*}
    \left\{
        \begin{array}{l}
        u''(r)+\displaystyle\frac{2}{r}u'(r)=f(u(r))\qquad \mbox{for} \;\; r>\eta,
        \\[0.3cm]
    u(\eta)=\sigma_Q,
    \qquad u'(\eta)=\widetilde\Phi(\eta),
        \end{array}
        \right.
    \end{equation*}
    which admits a unique  solution $u(r)=U_4(r,\eta)$ on $[\eta,+\infty)$ strictly increasing in $r$ and decreasing in $\eta$ with 
    \begin{equation}\label{2.46}
        \lim\limits_{r\to +\infty}U_4(r,\eta)=+\infty \quad \mbox{for}\;\; 0\le\eta\le\eta^*,
    \end{equation}
ensured by a   proof  analogous to that in Lemma \ref{lem2.2}. 
  For any given $\bar\sigma>\sigma_Q$ and $0\le\eta\le\eta^*$, there thus exists a unique  $R(\eta,\bar\sigma)>0$ such that 
  \begin{equation}\label{shoot3}
      U_4(R(\eta,\bar\sigma),\eta)=\bar\sigma.
  \end{equation}
  From the monotonicity of $U_4(r,\eta)$ in $r$ and $\eta$ and the above equation, we can see that $R(\eta,\bar\sigma)$ is strictly increasing in $\eta$ for  $0\le\eta\le\eta^*$.
    Define  
   \begin{equation}\label{R_*}
R_*(\bar\sigma)=R(\eta,\bar\sigma)\Big|_{\eta=0},
   \end{equation}
   which is the critical radius such that problem $(\ref{2.3})_1$--$(\ref{2.3})_2$ has a unique solution $\sigma(r)$ satisfying $\sigma(0)=\sigma_Q$. Utilizing $(\ref{R*})$ and the monotonicity of $R(\eta,\bar\sigma)$ in $\eta$ for $0\le\eta\le\eta^*$, we have 
   \begin{equation}\label{R^*>R_*} R^*(\bar\sigma)=R(\eta^*,\bar\sigma)>R(\eta,\bar\sigma)\Big|_{\eta=0}=R_*(\bar\sigma).
   \end{equation}
   Similarly, by regarding $R$ and $\bar\sigma$ as  the variables and letting
   $$
  \eta=\eta(R,\bar\sigma),\quad \widetilde U_3(r,R,\bar\sigma)=U_3(r,\eta(R,\bar\sigma)),\quad \widetilde U_4(r,R,\bar\sigma)=U_4(r,\eta(R,\bar\sigma)),
   $$
   \begin{equation*}
       \sigma_1(r,R,\bar\sigma)=\left\{
       \begin{array}{l}
           \widetilde U_3(r,R,\bar\sigma) \quad \mbox{for}\;\; 0\le r\le\eta(R,\bar\sigma), 
           \\[0.3cm]
           \widetilde U_4(r,R,\bar\sigma) \quad  \mbox{for}\;\; \eta(R,\bar\sigma)< r\le R,
       \end{array}
       \right.
   \end{equation*}
   we see $(\sigma_1(r,R,\bar\sigma),\eta(R,\bar\sigma))$ solves problem $(2.2)_1$--$(\ref{2.2})_4$ for every
  $\bar\sigma>\sigma_Q$ and 
   $R\in(R_*(\bar\sigma),R^*(\bar\sigma)]$.  
   The proof of $R_*'(\bar\sigma)>0$  is similar as 
   that of (2.19) in \cite{wu-xu}. Hence we obtain  assertion $(i)$.
   
  The proof of assertion $(ii)$ is similar and simpler, we omit it here.
The proof is complete.
\end{proof}

 Now for any $\bar\sigma>\sigma_Q$, we extend the domain of $F(R,\bar\sigma)$ in (\ref{F(R)}) as follows: 
\begin{equation}\label{F}
    F(R,\bar\sigma):=\left\{
    \begin{array}{ll}
          \displaystyle\frac{1}{R^3}\Big[\int_{\eta(R,\bar\sigma)}^RS(\sigma_1(r,R,\bar\sigma))r^2dr-\frac{\nu_1}{3}\eta^3(R,\bar\sigma)\Big] \quad \mbox{for}\;\; R\in(R_*(\bar\sigma),R^*(\bar\sigma)], 
          \\[0.4cm]
         \displaystyle\frac{1}{R^3}\int_0^RS(\sigma_2(r,R,\bar\sigma))r^2dr  \qquad\mbox{for} \;\; R\in(0,R_*(\bar\sigma)].
    \end{array}
    \right.
\end{equation} 

\medskip

\begin{lem}\label{lem2.7}
  Under  assumptions $(A1)$--$(A3)$ and $\bar\sigma>\sigma_Q$, 
  the following assertions hold:

$(i)$  
 $\displaystyle
    \frac{\partial F(R,\bar\sigma)}{\partial R}<0$ for $R\in(0,R_*(\bar\sigma))\cup(R_*(\bar\sigma),
    R^*(\bar\sigma))$.
Moreover,
\begin{equation}\label{fuhao}
  \lim_{R \to 0^+}F(R,\bar\sigma)=\frac{1}{3}g(\bar\sigma)
  \left\{
  \begin{array}{l}
  >0,\qquad\mbox{for}\;\; \bar\sigma>\tilde\sigma,
  \\
  \le 0,\qquad \mbox{for}\;\; \sigma_Q<\bar\sigma\le\tilde\sigma.
  \end{array}
  \right.
\end{equation}

$(ii)$ There exists a critical nutrient value $\sigma_*\in(\tilde\sigma,\sigma^*)$ such that 
\begin{equation}\label{2.52}
 \mathcal{F}(\bar\sigma):=F(R_*(\bar\sigma),\bar\sigma)
  \left\{
  \begin{array}{ll}
  >0,\qquad \mbox{for}\;\; \bar\sigma>\sigma_*,
  \\ 
   =0, \qquad\mbox{for}\;\; \bar\sigma=\sigma_*,
   \\ 
   <0,\qquad \mbox{for}\;\; \sigma_Q<\bar\sigma<\sigma_*.
\end{array}
\right.
\end{equation}
Moreover, $\mathcal{F}(\bar\sigma)$ is strictly increasing on $(\sigma_Q,+\infty)$.
\end{lem}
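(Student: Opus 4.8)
The plan is to split the monotonicity statement of $(i)$ according to the two regimes in which $F$ is defined by \eqref{F}, and then to analyze $\mathcal F(\bar\sigma)=F(R_*(\bar\sigma),\bar\sigma)$ by transporting the rescaling-and-linearization machinery of Lemma \ref{lem2.3}--Lemma \ref{lem2.5} to the one-layer critical problem at $R=R_*(\bar\sigma)$, the decisive extra ingredient being the continuity of $F$ across the junction radii $R_*(\bar\sigma)$ and $R^*(\bar\sigma)$, which lets me compare $\mathcal F$ with the already-understood function $\mathcal G$ of Lemma \ref{lem2.4}.

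For assertion $(i)$ I would again set $s=r/R$ and study the rescaled profile. On the one-layer interval $(0,R_*(\bar\sigma))$ the rescaled solution solves a single Dirichlet problem for the $f$-equation with $v'(0)=0$ and $v(1)=\bar\sigma$; differentiating in $R$ and applying the strong maximum principle exactly as in Lemma \ref{lem2.3}$(ii)$ (but now without the necrotic and quiescent interfaces) gives $\partial_R\mathcal V<0$, whence $\frac{\partial F}{\partial R}=\int_0^1 S'(\mathcal V)\,\partial_R\mathcal V\,s^2\,ds<0$ by $S'>0$. On the two-layer interval $(R_*(\bar\sigma),R^*(\bar\sigma))$ the situation is the genuine analogue of Lemma \ref{lem2.3} with the necrotic interface suppressed: the same coupled linearized free-boundary system and the same contradiction argument yield $\partial_R\mathcal V<0$ on $(0,1)$ and $\partial_R\phi>0$, so that, precisely as in \eqref{F'},
\[
\frac{\partial F}{\partial R}=\int_{\phi}^1 S'(\mathcal V)\,\partial_R\mathcal V\,s^2\,ds-\bigl(S(\sigma_Q)+\nu_1\bigr)\phi^2\,\partial_R\phi<0,
\]
where $S(\sigma_Q)+\nu_1\ge0$ is supplied by $(A3)$. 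For the limit in \eqref{fuhao} I would use the rescaled one-layer equation $\mathcal V''+\frac2s\mathcal V'=R^2 f(\mathcal V)$ with $\mathcal V(1)=\bar\sigma$: as $R\to0^+$ the right-hand side vanishes and $\mathcal V\to\bar\sigma$ uniformly, so $F(R,\bar\sigma)\to\frac13 S(\bar\sigma)$, whose sign is read off from $S(\tilde\sigma)=0$ and $S'>0$ in $(A2)$, giving positivity exactly for $\bar\sigma>\tilde\sigma$.

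For assertion $(ii)$ the structural observation is that $F(\cdot,\bar\sigma)$ is continuous across $R_*(\bar\sigma)$ and $R^*(\bar\sigma)$ (the one-, two-, and three-layer profiles agree there, the extra $\rho$- and $\eta$-terms vanishing), and hence, by $(i)$ together with \eqref{F'}, strictly decreasing on all of $(0,+\infty)$. Since $R_*(\bar\sigma)<R^*(\bar\sigma)$ by \eqref{R^*>R_*}, this produces the key comparison $\mathcal F(\bar\sigma)>\mathcal G(\bar\sigma)$ for every $\bar\sigma>\sigma_Q$. To prove $\mathcal F$ strictly increasing I would rescale the one-layer critical problem at $R=R_*(\bar\sigma)$ (where $\sigma(0)=\sigma_Q$), writing $\mathcal W$ for its rescaled profile so that $\mathcal F(\bar\sigma)=\int_0^1 S(\mathcal W(s,\bar\sigma))s^2\,ds$; differentiating in $\bar\sigma$ and putting $z=\partial_{\bar\sigma}\mathcal W$, one finds that $z$ solves a linear elliptic equation with strictly positive source $2R_*\frac{dR_*}{d\bar\sigma}f(\mathcal W)$ (using $\frac{dR_*}{d\bar\sigma}>0$ from Lemma \ref{lem2.6}) and boundary data $z(0)=0$, $z(1)=1$, so the strong maximum principle forces $z>0$ on $(0,1)$ and hence $\mathcal F'(\bar\sigma)=\int_0^1 S'(\mathcal W)z\,s^2\,ds>0$; note this is cleaner than Lemma \ref{lem2.4}$(ii)$ since no moving interface appears in the integral. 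Combining $\mathcal F(\tilde\sigma)<0$ (because at $\bar\sigma=\tilde\sigma$ one has $\sigma_Q\le\mathcal W<\tilde\sigma$, so $S(\mathcal W)<0$ in the interior) with $\mathcal F(\sigma^*)>\mathcal G(\sigma^*)=0$ from the comparison above and Lemma \ref{lem2.5}, the intermediate value theorem yields a unique root $\sigma_*\in(\tilde\sigma,\sigma^*)$ and the sign pattern \eqref{2.52}.

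The step I expect to be the main obstacle is the two-layer monotonicity in $(i)$ on $(R_*(\bar\sigma),R^*(\bar\sigma))$: reproducing the simultaneous sign analysis of $\partial_R\mathcal V$ and $\partial_R\phi$ from Lemma \ref{lem2.3} for the coupled linearized free-boundary system, now with the interior condition $v'(0)=0$ replacing the necrotic interface, and checking that the interface jump conditions (the analogues of $(\ref{2.27})_3$ and $(\ref{2.27})_6$) still drive the Hopf-lemma contradiction that produces $\partial_R\phi>0$. Once this and the continuity of $F$ at the junction radii are secured, assertion $(ii)$ follows quickly from the comparison $\mathcal F>\mathcal G$ and Lemma \ref{lem2.5}.
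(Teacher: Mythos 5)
Your proposal is correct and follows essentially the same route as the paper: the paper obtains $(i)$ and the bulk of $(ii)$ by citing the two-layer analysis of \cite{wu-xu} (the same rescaling/linearization/maximum-principle arguments you spell out, in parallel with Lemmas \ref{lem2.3} and \ref{lem2.4}), and then proves $\sigma_*<\sigma^*$ from the strict monotonicity of $F(\cdot,\bar\sigma)$ on all of $(0,+\infty)$ together with $R_*(\bar\sigma)<R^*(\bar\sigma)$ — your comparison $\mathcal{F}(\sigma^*)>\mathcal{G}(\sigma^*)=0$ is the mirror image of the paper's $\mathcal{G}(\sigma_*)<\mathcal{F}(\sigma_*)=0$. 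One small point in your favor: your limit computation gives $\lim_{R\to0^+}F(R,\bar\sigma)=\frac13 S(\bar\sigma)$, which is what the stated sign dichotomy actually requires; the $\frac13 g(\bar\sigma)$ appearing in \eqref{fuhao} is evidently a typo in the paper.
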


\begin{proof}
 By a slight modification of the proofs of 
 (2.17), (2.20)-(2.23) in \cite{wu-xu}, we get the assertion
 $(i)$ and $(ii)$ with $\sigma_*>\tilde\sigma$. We only need to prove that $\sigma_*<\sigma^*$. 
    Combining $(\ref{F'})$, $(\ref{F})$ and assertion $(i)$ in this Lemma, we can derive that for every $\bar\sigma>\sigma_Q$, $F(R,\bar\sigma)$ is strictly
    decreasing in $R\in(0,+\infty)$. Then from $(\ref{2.43})$ $(\ref{R^*>R_*})$, $(\ref{2.52})$ and Lemma $\ref{lem2.4}$ $(ii)$, we have 
$$
\mathcal{G}(\sigma_*)=F(R^*(\sigma_*),\sigma_*)<F(R_*(\sigma_*),\sigma_*)=0=F(R^*(\sigma^*),\sigma^*)=\mathcal{G}(\sigma^*),
$$
which implies that $\tilde\sigma<\sigma_*<\sigma^*$. 
The proof is complete.
\end{proof}
\medskip

 With the above preparations, now we can state our main result on the existence and uniqueness of stationary solutions. 
 
  \medskip

\begin{thm}\label{thm2.9}
Under assumptions $(A1)$--$(A3)$, the existence and structure of the stationary solutions to problem $(\ref{1.1})$--$(\ref{1.4})$ are classified by the external nutrient supply $\bar\sigma$ as follows:

  $(i)$ For $\bar\sigma>\sigma^*$, there exists a unique stationary 
  solution  $(\sigma_s(r),\eta_s, \rho_s, R_s)$ with a three-layer structure to problem $(\ref{1.1})$--$(\ref{1.4})$,  where 
  $\eta_s=\eta(R_s,\bar\sigma)$, $\rho_s=\rho(R_s,\bar\sigma)$  and $R_s$  correspond to  the radius of the interface between proliferating cells and quiescent cells,  the radius of the necrotic core and 
  the unique root of $F(R,\bar\sigma)=0$ within $(R^*(\bar\sigma),+\infty)$, respectively.

  $(ii)$ For $\sigma_*<\bar\sigma\le \sigma^*$,  there exists  a unique stationary solution $(\sigma_s(r),\eta_s,R_s)$ with
  a two-layer structure  to problem
  $(\ref{1.1})$--$(\ref{1.4})$, where $\eta_s=\eta(R_s,\bar\sigma)$ and $R_s$ correspond to the radius of the quiescent core and the unique root of $F(R,\bar\sigma)=0$ within $(R_*(\bar\sigma),R^*(\bar\sigma)]$, respectively.

  $(iii)$ For $\tilde\sigma<\bar\sigma\le\sigma_*$,  there exists a unique stationary solution $(\sigma_s(r),R_s)$ with a one-layer structure containing only proliferating cells to problem $(\ref{1.1})$--$(\ref{1.4})$,
 where $R_s$ is the unique root of $F(R,\bar\sigma)=0$ within $(0,R_*(\bar\sigma)]$.

  $(iv)$ For $\bar\sigma\le \tilde\sigma$,  there exists only trivial stationary solution to problem $(\ref{1.1})$--$(\ref{1.4})$. 
\end{thm}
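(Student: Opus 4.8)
The plan is to collapse all three structural problems into the single scalar equation $F(R,\bar\sigma)=0$ and to read off the answer from the global monotonicity of $F$ prepared in the preceding lemmas. By the constructions behind Lemma~\ref{lem2.5} and Lemma~\ref{lem2.6}, solving the three-layer problem $(\ref{2.1})$, the two-layer problem $(\ref{2.2})_1$--$(\ref{2.2})_4$ and the one-layer problem $(\ref{2.3})_1$--$(\ref{2.3})_2$ is equivalent to finding a root of $F(R,\bar\sigma)=0$ on $(R^*(\bar\sigma),+\infty)$, on $(R_*(\bar\sigma),R^*(\bar\sigma)]$ and on $(0,R_*(\bar\sigma)]$, respectively; the choice of interval automatically enforces the admissibility constraints $\sigma(0)\ge\sigma_D$ and $\sigma(0)\ge\sigma_Q$, so a root landing in a given interval is guaranteed to furnish a bona fide solution of the matching type.

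Next I would record the three facts that drive the classification. First, by Lemma~\ref{lem2.7}$(i)$ together with $(\ref{F'})$, and using the continuity of $F$ across the junction radii $R_*(\bar\sigma)$ and $R^*(\bar\sigma)$, the function $F(\cdot,\bar\sigma)$ is continuous and strictly decreasing on $(0,+\infty)$. Second, by $(\ref{fuhao})$ its right limit at $R=0^+$ is positive precisely when $\bar\sigma>\tilde\sigma$. Third, by $(\ref{2.30})$ its limit as $R\to+\infty$ equals $-\frac{\nu_2}{3}<0$. Strict monotonicity immediately gives at most one root on $(0,+\infty)$, which settles uniqueness across all structural types simultaneously, and shows that a (necessarily unique) root exists if and only if the $0^+$ limit is positive, that is, if and only if $\bar\sigma>\tilde\sigma$.

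It then remains to locate that root relative to $R_*(\bar\sigma)$ and $R^*(\bar\sigma)$ using the signs of $\mathcal G(\bar\sigma)=F(R^*(\bar\sigma),\bar\sigma)$ and $\mathcal F(\bar\sigma)=F(R_*(\bar\sigma),\bar\sigma)$ supplied by $(\ref{2.43})$ and $(\ref{2.52})$, and the ordering $\tilde\sigma<\sigma_*<\sigma^*$. For $\bar\sigma>\sigma^*$ one has $\mathcal G(\bar\sigma)>0$, so the root lies in $(R^*(\bar\sigma),+\infty)$ and Lemma~\ref{lem2.5} gives the three-layer solution, proving $(i)$. For $\sigma_*<\bar\sigma\le\sigma^*$ one has $\mathcal G(\bar\sigma)\le0<\mathcal F(\bar\sigma)$, so the root lies in $(R_*(\bar\sigma),R^*(\bar\sigma)]$ and Lemma~\ref{lem2.6}$(i)$ gives the two-layer solution, proving $(ii)$. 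For $\tilde\sigma<\bar\sigma\le\sigma_*$ one has $\mathcal F(\bar\sigma)\le0$ while the $0^+$ limit is positive, so the root lies in $(0,R_*(\bar\sigma)]$ and Lemma~\ref{lem2.6}$(ii)$ gives the one-layer solution, proving $(iii)$. The half-open intervals match the boundary values $\bar\sigma=\sigma^*$ and $\bar\sigma=\sigma_*$ to the two-layer and one-layer cases respectively, which I would check is consistent with the degeneration $\rho\to0$ and $\eta\to0$ at $R=R^*(\bar\sigma)$ and $R=R_*(\bar\sigma)$.

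For part $(iv)$ I would split $\bar\sigma\le\tilde\sigma$ into two subcases. When $\sigma_Q<\bar\sigma\le\tilde\sigma$, the $0^+$ limit of $F$ is nonpositive by $(\ref{fuhao})$, so strict monotonicity forces $F(R,\bar\sigma)<0$ for all $R>0$ and no nontrivial stationary radius exists. When $\bar\sigma\le\sigma_Q$ the lemma machinery no longer applies, so I would argue directly: the stationary nutrient profile is strictly increasing in $r$ with boundary value $\bar\sigma\le\sigma_Q$, hence the whole tumor lies in the quiescent--necrotic regime, the proliferation integral in $(\ref{1.3})$ vanishes, and its right-hand side is strictly negative for every $R>0$, leaving only the trivial solution. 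The main obstacle I anticipate is precisely this last regime $\bar\sigma\le\sigma_Q$, which falls outside the hypotheses of Lemmas~\ref{lem2.1}--\ref{lem2.7}, together with a careful verification that $F$ is continuous at the two junction radii so that its piecewise strict decrease upgrades to global strict monotonicity on $(0,+\infty)$; the rest of the argument is an assembly of the established lemmas.
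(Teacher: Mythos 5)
Your proposal is correct and follows essentially the same route as the paper: reduce the stationary problem to the scalar equation $F(R,\bar\sigma)=0$, then use the strict monotonicity of $F$ in $R$ together with the signs of $\mathcal G(\bar\sigma)=F(R^*(\bar\sigma),\bar\sigma)$ and $\mathcal F(\bar\sigma)=F(R_*(\bar\sigma),\bar\sigma)$ (i.e.\ Lemmas \ref{lem2.5} and \ref{lem2.7}) to place the unique root in the interval matching each structural type. The only (harmless) divergence is in part $(iv)$, where the paper handles all $\bar\sigma\le\tilde\sigma$ uniformly by noting that $\sigma\le\bar\sigma<\tilde\sigma$ forces $S(\sigma(r,t))<0$ throughout the proliferating region, hence $R'(t)<0$ by \eqref{1.3}, whereas you split off the subcase $\bar\sigma\le\sigma_Q$ and argue it directly.
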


\begin{proof}

By the definition (\ref{F(R)}) and (\ref{F}) of $F(R,\bar\sigma)$, we see for $\bar\sigma>\sigma_Q$,
problem $(\ref{1.1})$--$(\ref{1.4})$ in 
the stationary case is equivalent to equation  
$F(R,\bar\sigma)=0$. 

From Lemma $\ref{lem2.5}$,
equation $F(R, \bar{\sigma}) = 0$ has a unique root $R_{s}=R_s(\bar\sigma) \in (R^{*}(\bar{\sigma}), +\infty)$ 
if and only if  $\bar{\sigma} > \sigma^{*}$. Then by the deduction before Lemma 2.3, we have $(\sigma_s, \rho_s, \eta_s, R_s)=(\Sigma(r, R_s, \bar\sigma), \rho(R_s,\bar\sigma),\eta(R_s, \bar\sigma), R_s)$ is the 
stationary solution of problem (1.1)--(1.4). Hence we get 
assertion $(i)$. 

Similarly,  by Lemma $\ref{lem2.7}$, we
see equation $F(R,\bar\sigma)=0$
has a unique root $R_s\in(R_*(\bar\sigma),R^*(\bar\sigma)]$ if and only if $\bar\sigma\in(\sigma_*,\sigma^*]$, and
equation $F(R,\bar\sigma)$=0
has a unique root $R_s\in(0,R_*(\bar\sigma)]$ if and only if
$\bar\sigma\in(\tilde\sigma,\sigma_*]$, then the assertion
$(ii)$ and $(iii)$ follows.

Finally, note that for $\bar\sigma<\tilde\sigma$, by $(A1)$ and $(A2)$, we can easily show $S(\sigma(r,t))<S(\tilde\sigma)=0$ for $0<r<R(t)$ and $t>0$. Then by (\ref{1.3}), we see that
$R'(t)<0$ for $R(t)>0$. Hence problem $(\ref{1.1})$--$(\ref{1.4})$ has  no non-trivial stationary solution.
The proof is complete.
\end{proof}

\medskip

\begin{rem}
     The above conditions on the existence and uniqueness
     of stationary solutions are actually necessary and sufficient, thereby the two critical nutrient concentrations $\sigma^*$ and $\sigma_*$ are biologically significant. From (\ref{shoot}), (\ref{eta*}), (\ref{R*}), (\ref{2.43}) and Lemma \ref{lem2.4}, we see that the critical 
nutrient concentration  $\sigma^{*}$  is determined by the functions
$f$, $g$, $S$ and the model parameters $\sigma_{Q}$, $\sigma_{D}$ and 
$\nu_{1}$. Similarly, from Lemmas \ref{lem2.6} and \ref{lem2.7}, we see $\sigma_{*}$ is determined by functions $f$, $S$ and the model parameter $\sigma_{Q}$.   From Lemma \ref{lem2.4} $(ii)$, 
we easily verify that
$\mathcal{G}$ is strictly decreasing in $\nu_1$ with any fixed $\bar\sigma>\tilde\sigma$ and thus $\sigma^*$ is  strictly increasing in $\nu_1$.  Similarly, we can consider the critical problem $(\ref{2.3})_1$--$(\ref{2.3})_2$ for $R=R_*(\bar\sigma)$ and use the analysis in Lemmas $\ref{lem2.1}$--$\ref{lem2.4}$ to derive that   $\sigma_*$ is strictly decreasing with respect to $\sigma_Q$. 

According to Theorem \ref{thm2.9} and  (\ref{2.43}), we see that $\sigma^*$ is the maximum  external nutrient concentration such that  the dormant tumor has  a two-layer structure with a quiescent core and outer a shell of proliferating cells, where the nutrient concentration at the tumor center is exactly $\sigma_D$. Similarly, by $(\ref{2.52})$, $\sigma_*$ is the maximum  external nutrient concentration such that  the dormant tumor has  a one-layer structure with only proliferating cells, where the nutrient concentration at the tumor center is exactly $\sigma_Q$. The two values $\sigma^{*}$ and $\sigma_{*}$ correspond to the critical states between the three-layer and two-layer, and between the two-layer and one-layer dormant tumor structures. They will play a significant role in the long-time dynamics of solid tumors.
\end{rem}

 \medskip
 \hskip 1em

\section{Asymptotic behavior}\label{3}
 \setcounter{equation}{0}
\hskip 1em

  In this section, we study the asymptotic behavior of transient solutions of 
  free boundary problem $(\ref{1.1})$--$(\ref{1.4})$.

\medskip

\begin{thm}\label{thm3.1}
    Under assumptions $(A1)$--$(A3)$,  for any $\bar\sigma>0$ and $R_0>0$, problem $(\ref{1.1})$--$(\ref{1.4})$ has a unique global solution for all
    $t\ge 0$, and the following asymptotic behavior holds:
 
  $(i)$ If $\bar\sigma\le \tilde\sigma$, 
  then for any $R_0>0$, $\displaystyle \lim_{t\to+\infty}R(t)=0$, 
  and the tumor will finally disappear. 
    
  $(ii)$ If $\tilde\sigma<\bar\sigma\le\sigma_*$, then for any $R_0>0$, 
   $\displaystyle\lim_{t\to+\infty}R(t)=R_s$, and the tumor will 
   finally converge to the proliferating one-layer stationary state with
    radius $R_s$.
   
   $(iii)$ If  $\sigma_*<\bar\sigma\le\sigma^*$, then for any $R_0>0$, 
   $$\displaystyle\lim_{t\to+\infty}R(t)=R_s, \quad \lim_{t\to+\infty}\eta(t):=\lim_{t\to+\infty}\eta(R(t),\bar\sigma) =\eta_s,
   $$
   and the tumor will finally converge to the 
   proliferating-quiescent two-layer stationary state with radius $R_s$, 
   and a quiescent core whose  radius is $\eta_s$. 

$(iv)$ If  $\bar\sigma>\sigma^*$, then for any $R_0>0$, 
   $$ \displaystyle\lim_{t\to+\infty}R(t)=R_s,\quad \lim_{t\to+\infty}\eta(t):=\lim_{t\to+\infty}\eta(R(t),\bar\sigma) =\eta_s,\quad \displaystyle\lim_{t\to+\infty}\rho(t):=\lim_{t\to+\infty}\rho(R(t),\bar\sigma) =\rho_s,
   $$
    and the tumor will finally converge to the 
   proliferating-quiescent-necrotic three-layer stationary state with radius $R_s$, 
   a necrotic core with radius $\rho_s$ and an interface $r=\eta_s$ which separating the proliferating cells and quiescent cells.   
\end{thm}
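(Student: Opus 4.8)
The plan is to collapse the full free-boundary problem to a scalar autonomous ODE for the radius and then run a phase-line analysis, since all the hard elliptic work has already been done in Section \ref{2}. Because the nutrient equation $(\ref{1.1})$--$(\ref{1.2})$ carries no time derivative, at each instant the concentration is slaved to the current radius: given $R=R(t)$ and $\bar\sigma$, the quasi-steady subproblem $(\ref{1.1})$--$(\ref{1.2})$ is precisely the stationary nutrient problem solved layer-by-layer in Section \ref{2}, so $\sigma(r,t)=\Sigma(r,R(t),\bar\sigma)$, the one-, two-, or three-layer form being dictated by whether $R(t)$ lies in $(0,R_*(\bar\sigma)]$, $(R_*(\bar\sigma),R^*(\bar\sigma)]$, or $(R^*(\bar\sigma),+\infty)$. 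Substituting into the mass-balance law $(\ref{1.3})$ and recalling the definitions $(\ref{F(R)})$ and $(\ref{F})$ of $F$, the system reduces to
\begin{equation*}
R'(t)=R(t)\,F(R(t),\bar\sigma)=:G(R(t)),\qquad R(0)=R_0.
\end{equation*}
When $\bar\sigma\le\sigma_Q$ the tumor has no proliferating layer and $F$ must be read accordingly, but this situation is subsumed under case $(i)$, where only the sign of the integrand in $(\ref{1.3})$ matters.

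To secure a unique global solution I would first verify that $G$ is locally Lipschitz on $(0,+\infty)$. Continuity and $C^1$-dependence of $F(\cdot,\bar\sigma)$ come from the smooth dependence of the elliptic profiles on $R$ established throughout Lemmas \ref{lem2.1}--\ref{lem2.7}; the one delicate point is that $F$ and $\partial F/\partial R$ must match across the structural thresholds $R=R^*(\bar\sigma)$ and $R=R_*(\bar\sigma)$. This holds because at $R^*$ the three-layer profile with $\rho=0$ coincides with the two-layer profile satisfying $\sigma(0)=\sigma_D$, and at $R_*$ the two-layer profile with $\eta=0$ coincides with the one-layer profile satisfying $\sigma(0)=\sigma_Q$. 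Picard--Lindel\"of then gives a unique local solution, and global existence follows once $R(t)$ is confined to a compact subinterval of $(0,+\infty)$ (or shown to decrease to $0$), which is a by-product of the monotonicity used below.

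The asymptotic behavior is read off from the graph of $G$. The two facts driving everything are that $F(\cdot,\bar\sigma)$ is strictly decreasing on $(0,+\infty)$ --- combining $\partial F/\partial R<0$ from $(\ref{F'})$ with Lemma \ref{lem2.7}$(i)$ --- together with the sign data $\lim_{R\to+\infty}F=-\nu_2/3<0$ from $(\ref{2.30})$, the behavior of the limit as $R\to0^+$ in $(\ref{fuhao})$ (positive exactly when $\bar\sigma>\tilde\sigma$), and the classifications $(\ref{2.43})$ and $(\ref{2.52})$ of $\mathcal G(\bar\sigma)=F(R^*(\bar\sigma),\bar\sigma)$ and $\mathcal F(\bar\sigma)=F(R_*(\bar\sigma),\bar\sigma)$. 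In case $(i)$, $\bar\sigma\le\tilde\sigma$ forces $\sigma\le\bar\sigma\le\tilde\sigma$ by the maximum principle and hence $S(\sigma)\le S(\tilde\sigma)=0$, so the integrand in $(\ref{1.3})$ is strictly negative (exactly as in the proof of Theorem \ref{thm2.9}$(iv)$); thus $G<0$ on $(0,+\infty)$, $R(t)$ strictly decreases and is bounded below by $0$, and its limit, being a zero of $G$, must be $0$. In cases $(ii)$--$(iv)$ the sign data and strict monotonicity of $F$ yield a unique zero $R_s$ of $G$ in the relevant interval given by Theorem \ref{thm2.9}, with $G>0$ for $R<R_s$ and $G<0$ for $R>R_s$; hence $R_s$ is a globally asymptotically stable equilibrium and $R(t)\to R_s$ monotonically from either side. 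Finally $\eta(t)=\eta(R(t),\bar\sigma)\to\eta(R_s,\bar\sigma)=\eta_s$ and $\rho(t)=\rho(R(t),\bar\sigma)\to\rho(R_s,\bar\sigma)=\rho_s$ follow from the continuity of $\eta(\cdot,\bar\sigma)$ and $\rho(\cdot,\bar\sigma)$ (Lemma \ref{lem2.3}$(iii)$).

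The main obstacle is the rigorous reduction step together with the regularity of $G$ at the thresholds: one must confirm that the quasi-steady elliptic problem is uniquely and continuously solvable for every admissible $R>0$ and $\bar\sigma>0$ (not merely for the $\bar\sigma>\sigma_Q$ range handled in Section \ref{2}), and that the piecewise field $G$ is Lipschitz across $R_*(\bar\sigma)$ and $R^*(\bar\sigma)$, so that uniqueness of the ODE --- and hence of the free-boundary evolution --- is not lost precisely where the layer structure changes. Once this gluing is in place, the remaining phase-line argument is routine.
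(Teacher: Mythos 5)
Your proposal is correct and follows essentially the same route as the paper: reduce the system to the scalar autonomous ODE $R'(t)=R(t)F(R(t),\bar\sigma)$, dispose of case $(i)$ by the sign of $S(\sigma)$, and deduce $(ii)$--$(iv)$ from the strict monotonicity of $F(\cdot,\bar\sigma)$ together with the sign classification in Lemmas \ref{lem2.5} and \ref{lem2.7}, followed by continuity of $\eta(\cdot,\bar\sigma)$ and $\rho(\cdot,\bar\sigma)$. The only notable difference is that the paper secures global existence and uniqueness via the explicit uniform bound $-\nu_2/3\le F(R,\bar\sigma)\le S(\bar\sigma)/3$, which yields the two-sided exponential bounds $R_0e^{-\nu_2 t/3}\le R(t)\le R_0e^{S(\bar\sigma)t/3}$, rather than through your Lipschitz-matching discussion at the thresholds $R_*(\bar\sigma)$ and $R^*(\bar\sigma)$.
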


 \begin{proof}
     By  the proof of Theorem 2.8 $(iv)$, we immediately get the assertion $(i)$.
     
     From Lemmas \ref{lem2.1}--\ref{lem2.2} and \ref{lem2.6}, we conclude for any given $\bar\sigma>\sigma_Q$, $R(t)>0$ for $t>0$, there exists a unique nutrient concentration function $\sigma=\sigma(r,t)$  solving problem $(\ref{1.1})$--$(\ref{1.2})$. Then problem $(\ref{1.1})$--$(\ref{1.4})$ is equivalent to the following Cauchy problem:
     \begin{equation}\label{3.1}
     \left\{
         \begin{array}{ll}
              \displaystyle\frac{d R(t)}{dt}=R(t) F(R(t),\bar\sigma) \qquad \mbox{ for}\;\; t>0, 
              \\[0.3cm]
              R(0)=R_0,
         \end{array}
         \right.
     \end{equation}
     where $F(R,\bar\sigma)$ is given by $(\ref{F(R)})$ and $(\ref{F})$.
     
     Clearly, $F(R,\bar\sigma)$ is strictly decreasing in $R>0$ for any fixed $\bar\sigma>\sigma_Q$,
    and by $(A3)$ we have the following uniform estimate
      \begin{equation}\label{3.2}
         -\frac{\nu_2}{3}\le F(R,\bar\sigma)\le 
         \frac{S(\bar\sigma)}{3}\qquad
         \mbox{for}\;\; R>0.
     \end{equation}
     It implies that 
     \begin{equation}\label{3.3}
        R_0 e^{-\frac{\nu_2}{3}t} \le R(t)\le 
        R_0e^{\frac{S(\bar\sigma)}{3}t} \qquad \mbox{for} \;\; t>0.
     \end{equation}
Consequently, problem $(\ref{3.1})$ has a unique solution for all $t>0$.

By Theorem \ref{thm2.9} $(i)$--$(iii)$ and 
the classical theory of 
ordinary differential equations, we see that for any 
$\bar\sigma>\tilde\sigma$, the unique stationary
solution $R_s$ of problem (\ref{3.1}) is globally asymptotically stable. Moreover, for any initial
data $R_0>0$, $R(t)$ converges exponentially to 
$R_s$. Then by the deduction in
Section 2, we get the desired results in assertions
$(ii)$--$(iv)$.
 The proof is complete.
\end{proof}

\medskip

From Lemma \ref{lem2.3} and Lemma \ref{lem2.6}, we see that 
during the tumor evolution, the internal structure of the tumor is determined  by the external nutrient concentration $\bar\sigma$ and the tumor radius $R(t)$. The tumor may exhibit the following six distinct states during its evolution:

$(1)$ proliferating one-layer state; 

$(2)$ proliferating-quiescent two-layer state; 

$(3)$ proliferating-quiescent-necrotic three-layer state;

$(4)$ quiescent one-layer state;

$(5)$ quiescent-necrotic two-layer state;    

$(6)$ necrotic one-layer state.

More precisely,  Lemma \ref{lem2.3} and Lemma $\ref{lem2.6}$ imply that if $\bar\sigma>\sigma_Q$, there exist two positive 
critical radii $R_*(\bar\sigma)$ and $R^*(\bar\sigma)$
($R_*(\bar\sigma)<R^*(\bar\sigma)$), such that the tumor stays in the proliferating one-layer state for
 $0<R\le R_*(\bar\sigma)$,  in the proliferating-quiescent two-layer state for 
 $R_*(\bar\sigma)<R\le R^*(\bar\sigma)$, and in proliferating-quiescent-necrotic three-layer state for $R>R^*(\bar\sigma)$. 

 On the other hand, if $\sigma_D<\bar\sigma\le \sigma_Q$, the tumor will not contain any proliferating cells, and
 it may exhibit quiescent one-layer state or 
 quiescent-necrotic two-layer state. If 
 $\bar\sigma\le\sigma_D$, then all tumor cells
 are obviously necrotic and the tumor always stays
 in the necrotic one-layer state.

Combining with Theorem 3.1,  
mutual transformation between these different structural states can be observed during the tumor evolution. Especially, we can see the formation and dissolution of the quiescent and necrotic cores.
\medskip

\begin{cor}\label{cor3.2} Assume $(A1)$--$(A3)$ hold and $\bar\sigma>\sigma_Q$. We have the following assertions: 

$(i)$ For $\bar\sigma>\sigma^*$ and initial radius $R_*(\bar\sigma)<R_0< R^*(\bar\sigma)$, 
 there exists a  time point $T>0$   such that the tumor is in   proliferating-quiescent two-layer state for $0<t\le T$, and
  in proliferating-quiescent-necrotic three-layer state  for  $ t> T$.

  $(ii)$ 
  For $\sigma_*<\bar\sigma\le\sigma^*$ and  initial radius $R_0>R^*(\bar\sigma)$, there exists a  time point $T>0$  
  such that the tumor is in proliferating-quiescent-necrotic three-layer state for 
  $0<t<T$, 
  and in proliferating-quiescent two-layer state for $t\ge T$.

  $(iii)$ For $\sigma_*<\bar\sigma\le\sigma^*$ and initial radius  $0<R_0< R_*(\bar\sigma)$, there exists a time point $T>0$  such that the tumor is in proliferating one-layer state for $0<t\le T$, and in proliferating-quiescent two-layer state for $t> T$.

  $(iv)$ For $\sigma_Q<\bar\sigma\le\sigma_*$ and initial radius  $R_*(\bar\sigma)< R_0\le R^*(\bar\sigma)$, there exists  a  time point $T>0$  such that the tumor is in proliferating-quiescent two-layer state for $0<t<T$, and in proliferating one-layer state for $t\ge T$.

  $(v)$ For $\bar\sigma>\sigma^*$ and initial radius  $0<R_0< R_*(\bar\sigma)$, there exist two  time
  points $T_2>T_1>0$  such that the tumor is in proliferating one-layer state for $0<t\le T_1$, in proliferating-quiescent two-layer state for $T_1< t\le T_2$, and in proliferating-quiescent-necrotic three-layer state for $t> T_2$.

$(vi)$ For $\sigma_Q<\bar\sigma\le\sigma_*$ and initial radius  $R_0>R^*(\bar\sigma)$, there exist two  time points $T_2>T_1>0$  such that the tumor is in proliferating-quiescent-necrotic three-layer state for $0<t<T_1$,  in proliferating-quiescent two-layer state for $T_1\le t< T_2$, and in proliferating one-layer state for $t\ge T_2$.

\end{cor}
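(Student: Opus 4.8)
The plan is to reduce the entire statement to the scalar autonomous ODE \eqref{3.1} for $R(t)$ and to combine two facts already in hand: the strict monotonicity of the trajectory $R(t)$, and the exact location of the stationary radius $R_s$ relative to the two critical radii $R_*(\bar\sigma)<R^*(\bar\sigma)$. First I would recall from the proof of Theorem \ref{thm3.1} that, for $\bar\sigma>\tilde\sigma$, the map $R\mapsto F(R,\bar\sigma)$ is continuous and strictly decreasing on $(0,+\infty)$ with its unique zero at $R=R_s$. Hence $F(R,\bar\sigma)>0$ for $0<R<R_s$ and $F(R,\bar\sigma)<0$ for $R>R_s$, and since $R(t)>0$ the right-hand side $R(t)F(R(t),\bar\sigma)$ of \eqref{3.1} forces $R(t)$ to be \emph{strictly} increasing when $R_0<R_s$ and strictly decreasing when $R_0>R_s$; in both cases $R(t)\to R_s$ and the trajectory never crosses the level $R_s$. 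This one-sided, strictly monotone convergence is the structural mechanism behind every assertion.

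Next I would invoke the state classification recorded just after Theorem \ref{thm3.1}: for $\bar\sigma>\sigma_Q$ the tumor is in the proliferating one-layer state when $0<R(t)\le R_*(\bar\sigma)$, in the proliferating-quiescent two-layer state when $R_*(\bar\sigma)<R(t)\le R^*(\bar\sigma)$, and in the three-layer state when $R(t)>R^*(\bar\sigma)$. I would then place $R_s$ by Theorem \ref{thm2.9}: $R_s>R^*(\bar\sigma)$ for $\bar\sigma>\sigma^*$; $R_*(\bar\sigma)<R_s\le R^*(\bar\sigma)$ for $\sigma_*<\bar\sigma\le\sigma^*$; and $0<R_s\le R_*(\bar\sigma)$ for $\tilde\sigma<\bar\sigma\le\sigma_*$. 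Feeding the initial position $R_0$, the monotone direction of $R(t)$, and the side of the thresholds on which $R_s$ lies into the classification, each assertion reduces to counting how many of the two thresholds a monotone trajectory must traverse on its way to $R_s$.

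The transition times are then supplied by the intermediate value theorem together with continuity of $R(t)$. For instance, in $(i)$ we have $R_0<R^*(\bar\sigma)<R_s$, so $R(t)$ is strictly increasing and, since $R(t)\to R_s>R^*(\bar\sigma)$, there is a unique $T$ with $R(T)=R^*(\bar\sigma)$; strict monotonicity gives $R_*(\bar\sigma)<R(t)<R^*(\bar\sigma)$ for $t<T$ (the two-layer state) and $R(t)>R^*(\bar\sigma)$ for $t>T$ (the three-layer state), with no return below $R^*(\bar\sigma)$. The single-transition cases $(ii)$--$(iv)$ are handled identically once the direction and the relevant threshold are fixed, while the double-transition cases $(v)$ and $(vi)$ follow by applying the intermediate value theorem successively at $R_*(\bar\sigma)$ and $R^*(\bar\sigma)$; the ordering $T_1<T_2$ is forced by $R_*(\bar\sigma)<R^*(\bar\sigma)$ and the strict monotonicity of $R(t)$. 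The admissible subcase $\sigma_Q<\bar\sigma\le\tilde\sigma$ inside $(iv)$ and $(vi)$ is covered by the same scheme: here $F(R,\bar\sigma)<0$ for all $R>0$ by \eqref{fuhao} and the monotonicity of $F$, so $R(t)\downarrow 0$, yet the state-by-$R$ classification still yields exactly the stated structural transitions as $R(t)$ decreases through the thresholds.

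The point requiring genuine care is not the existence of the transition times but the guarantee that \emph{no further} transitions occur, and this is precisely where strict monotone convergence from one side is essential: each threshold lying strictly between $R_0$ and $R_s$ is crossed once and only once, and any threshold on the far side of $R_s$ is never reached. The remaining delicate case is the degenerate endpoint in $(ii)$ (at $\bar\sigma=\sigma^*$) and in $(iv)$, $(vi)$ (at $\bar\sigma=\sigma_*$), where $R_s$ coincides exactly with a critical radius $R^*(\bar\sigma)$ or $R_*(\bar\sigma)$; there $R(t)$ approaches the threshold monotonically from one side without meeting it at any finite time, so the corresponding structural transition is attained only asymptotically as $t\to+\infty$, while for every $\bar\sigma$ strictly inside the interval the finite transition time exists by the argument above.
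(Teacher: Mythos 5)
Your proposal is correct and follows essentially the same route as the paper: reduce everything to the scalar ODE \eqref{3.1}, use the strict monotonicity of $F(\cdot,\bar\sigma)$ to obtain one-sided, strictly monotone convergence of $R(t)$ to $R_s$, locate $R_s$ relative to the thresholds $R_*(\bar\sigma)<R^*(\bar\sigma)$ via Theorem \ref{thm2.9}, and apply the intermediate value theorem at each threshold the trajectory must cross. If anything you are more careful than the paper, which proves only assertion $(vi)$ and passes silently over the endpoint cases $\bar\sigma=\sigma^*$ and $\bar\sigma=\sigma_*$, where $R_s$ coincides with a critical radius and the final structural transition is reached only asymptotically rather than at a finite time $T$.
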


\begin{proof}
    We  only prove assertion $(vi)$, as the remaining assertions follow analogously. Suppose that $\sigma_Q<\bar\sigma\le\sigma_*$ and $R_0>R^*(\bar\sigma)$. By Lemma $\ref{lem2.3}$, we see that at the initial time $t=0$, the tumor has a proliferating-quiescent-necrotic three-layer structure. The inner two free boundaries are characterized by $\rho(t)\vert_{t=0}=\rho(R_0,\bar\sigma)$ and $\eta(t)\vert_{t=0}=\eta(R_0,\bar\sigma)$. Moreover,
    $F(R_0,\bar\sigma)<0$. By the monotonicity of $F(R,\bar\sigma)$ in $R$,  the tumor radius $R(t)$ converges monotonically decreasing to the stationary radius $R_s\in[0,R_*(\bar\sigma)]$ as $t\to\infty$. Consequently, there exist two times $0<T_1<T_2$ such that $R(T_1)=R^*(\bar\sigma)$ and $R(T_2)=R_*(\bar\sigma)$, which means that when $0<t<T_1$, the tumor is in proliferating-quiescent-necrotic three-layer state, while the tumor is in proliferating-quiescent two-layer state in the time interval $T_1\le t<T_2$. When $t\ge T_2$, the tumor is in the proliferating one-layer state. This completes the proof of assertion $(vi)$.
\end{proof}

\begin{rem} In the case $\sigma_D<\bar\sigma\le\sigma_Q$, we can also observe
mutual transformation between the quiescent one-layer state and the quiescent-necrotic two-layer state. In fact, replacing the proliferating layer by the quiescent layer in the necrotic tumor model of
 \cite{wu-wang-19}, there exists a critical radius
 $R^*(\bar\sigma)$ for $\sigma_D<\bar\sigma\le \sigma_Q$, such that 
 the tumor is in  the quiescent-necrotic two-layer state for $R>R^*(\bar\sigma)$, while in the quiescent one-layer state for $0<R\le R^*(\bar\sigma)$.   
    Then for any initial radius $R_0>R^*(\bar\sigma)$, there exists a finite time point $T>0$ such that the tumor is in the quiescent-necrotic two-layer state for $0<t<T$,  
    while in the quiescent one-layer state 
    for $t\ge T$ and will finally disappear.
\end{rem}
 \medskip
 \hskip 1em

\section{Conclusions and biological discussion}\label{4}
 \setcounter{equation}{0}
\hskip 1em
\textit{}

In this paper, we investigate  a free boundary problem modeling  the growth of solid tumors with three layers, a configuration with significant biological relevance observed in experiments
\cite{byr-cha-97,2021,Han-2021}. 
We establish a complete classification of radial
stationary solutions and long time behavior of radial transient solutions. By using a nonlinear analysis  approach, we find 
two critical nutrient concentration values $\sigma^*$ and $\sigma_*$, such that if the external nutrient supply $\bar\sigma>\sigma^*$, the tumor will
exponentially evolve to the three-layer stationary solution and if 
$\sigma_* <\bar\sigma<\sigma^*$ 
or $\tilde\sigma<\bar\sigma<\sigma_*$, the tumor
eventually evolves to the 
two-layer stationary solution or the one-layer stationary solution, respectively.   Furthermore, comparing with the two-layer tumor models (cf. \cite{liu-zhuang, wu-wang-19, wu-xu}), we obtain a new critical radius $R^*(\bar\sigma)$ for $\bar\sigma>\sigma_Q$,
 which distinguishes between the three-layer tumor structure and the two-layer tumor structure. This is consistent with the experimental observation on tumor spheroids in \cite{Han-2021}: approximately beyond a critical size of  $500$ $\rm{\mu}$m, a three-layer structure forms inside the tumor spheroids.  Our analysis also reveals that the critical tumor radius $R^*(\bar\sigma)$ is strictly increasing in the external nutrient concentration $\bar\sigma$.

Our work demonstrates that the external nutrient concentration $\bar\sigma$ may be taken as a measurable parameter determining the tumor's development process and final structure. This insight offers a potential mechanism for controlling tumor structure by modulating external nutrient supply. We hope these results may be useful for future tumor studies. 

\bigskip

\vspace{0.25em}

\begin{ac*}
This research is supported by the National Natural Science Foundation of China under the Grant No. 12101260, 12171349 and 12271389.
\end{ac*}

{\small
}

\end{CJK}
\end{document}